\documentclass{amsart}

\setlength{\textheight}{21.8truecm}
\setlength{\textwidth}{16.5truecm}
\oddsidemargin 2mm
\evensidemargin 2mm
\topmargin0mm

\newcommand{\Cone}{\mathrm{Cone}}
\newcommand{\con}{\mathrm{conv}}

\newcommand{\IR}{\mathbb R}
\newcommand{\IC}{\mathbb C}
\newcommand{\IT}{\mathbb T}
\newcommand{\Ra}{\Rightarrow}
\newcommand{\IN}{\mathbb N}
\newcommand{\w}{\omega}
\newcommand{\I}{\mathcal I}
\newcommand{\U}{\mathcal U}
\newcommand{\V}{\mathcal V}
\newcommand{\e}{\varepsilon}

\newtheorem{theorem}{Theorem}
\newtheorem{example}{Example}
\newtheorem{lemma}{Lemma}
\newtheorem{proposition}{Proposition}
\newtheorem{corollary}{Corollary}
\newtheorem{question}{Question}

\begin{document}

\title[compactly convex sets in linear topological spaces]{compactly convex sets in linear topological spaces}         
\author{T.~Banakh, M.~Mitrofanov, O.~Ravsky}        
\date{}          

\begin{abstract} A convex subset $X$ of a linear topological space is called {\em compactly convex} if there is a continuous compact-valued map $\Phi:X\to\exp(X)$ such that $[x,y]\subset\Phi(x)\cup\Phi(y)$ for all $x,y\in X$. We prove that each convex subset of the plane is compactly convex. On the other hand, the space $\IR^3$ contains a convex set that is not compactly convex. Each compactly convex subset $X$ of a linear topological space $L$ has locally compact closure $\bar X$ which is metrizable if and only if each compact subset of $X$ is metrizable.
\end{abstract}
\address{T.Banakh: Ivan Franko National University of Lviv (Ukraine) and Jan Kochanowski University, Kielce (Poland)}
\address{M.Mitrofanov, O.Ravsky: Institute of Applied Problems of Mathematics and Mechanics of Ukrainian Academy of Science, Lviv, Ukraine}
\subjclass{52A07; 52A30; 46A55; 54D30}
\keywords{Compactly convex set, linear topological space}

\maketitle

This paper is devoted to so-called  compactly convex sets in linear topological spaces. A convex subset $X$ of a linear topological space $L$ is defined to be {\em compactly convex} if there is a continuous compact-valued map $\Phi$ assigning to each point $x\in X$ a compact subset $\Phi(x)\subset X$ so that $[x,y]\subset\Phi(x)\cup\Phi(y)$ for any points $x,y\in X$.

The continuity of the multi-valued map $\Phi$ is understood in the sense of the Vietoris topology on the hyperspace $\exp(X)$ of all non-empty compact subsets of $X$. The sub-base of this topology consists of the sets $$\langle U\rangle=\{K\in\exp(X):K\subset U\}\quad\mbox{and}\quad\rangle U\langle=\{K\in\exp(X):K\cap U\ne\emptyset\},$$ where $U$ runs over open subsets of $X$.
If the space $X$ is metrizable by a metric $\rho$, then the Vietoris topology on $\exp(X)$ is metrizable by the Hausdorff metric $$\rho_H(A,B)=\max\{\max_{a\in A}\rho(a,B),\max_{b\in B}\rho(b,A)\}.$$ A map $\Phi:X\to\exp(X)$ is called upper semi-continuous if it is continuous with respect to the upper topology on $\exp(X)$ generated by the base consisting of the sets $\langle U\rangle$ where $U$ runs over open subsets of $X$.

The interest in compactly convex sets came from Theory of Retracts, where the following problem still stands open: is the direct limit of stratifiable absolute retracts an absolute retract for stratifiable spaces? For direct limits of metrizable absolute retracts the positive answer to this problem was given in the paper \cite{Ban}. The proof relied on the notion of a {\em compactly equi-connected} space, that is a topological space $X$ endowed with an equiconnecting function $\lambda:X\times X\times[0,1]\to X$ such that for some upper semi-continuous compact-valued map $\Phi:X\Ra X$ the inclusion $\lambda(x,y,t)\subset\Phi(x)\cup\Phi(y)$ holds for all $(x,y,t)\in X\times X\times[0,1]$. By an equiconnected function on $X$ we understand a continuous map $\lambda:X\times X\times[0,1]\to X$ such that $\lambda(x,y,0)=x$, $\lambda(x,y,1)=y$ and $\lambda(x,x,t)=x$ for all $(x,y,t)\in X\times X\times [0,1]$.

In light of this notion it became interesting to detect compactly equiconnected spaces among convex sets endowed with the natural equi-connected structure. This problem turned to be not trivial even in case of convex subsets of the plane.

The principal our result is

\begin{theorem}\label{main} Any convex subset of the plane is compactly convex.
\end{theorem}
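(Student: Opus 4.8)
The plan is to build $\Phi$ from an explicit formula that already disposes of the closed and the strictly convex cases, and then to repair it near the part of $X$ lying on the boundary of $\bar X$. Two preliminary observations guide everything. Taking $y=x$ in the definition forces $x\in\Phi(x)$ for every $x$, so it is natural to seek a $\Phi$ with each value convex; and then the requirement $[x,y]\subseteq\Phi(x)\cup\Phi(y)$ is equivalent to the single condition $\Phi(x)\cap\Phi(y)\cap[x,y]\neq\emptyset$, because two closed subsets of a segment that cover it and contain its endpoints must meet. Also, if $\dim X\le1$ then $X$ is an interval on a line, where $\Phi$ is trivial (and such $X$ are anyway covered by the devices below), so we may assume $X$ is $2$-dimensional; then $\operatorname{int}X=\operatorname{int}\bar X\neq\emptyset$ and $\operatorname{int}X\subseteq X\subseteq\bar X$.

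\emph{Explicit formulas.} If $X$ is compact, then $\Phi(x)=\tfrac12(x+X)=\{\tfrac12(x+z):z\in X\}$ works: it is compact, contained in $X$ and contains $x$ by convexity, varies Lipschitz-continuously in the Hausdorff metric, and for $z=(1-t)x+ty$ with $t\le\tfrac12$ the point $2z-x=(1-2t)x+2ty$ is a convex combination of $x,y$, so $z\in\tfrac12(x+X)$; the case $t\ge\tfrac12$ is symmetric. The same argument gives $\Phi(x)=\tfrac12(x+\bar X)$ whenever $\bar X$ is compact and \emph{strictly convex}, since then $\tfrac12(x+p)\in\operatorname{int}X\cup\{x\}\subseteq X$ for every $x\in X$ and $p\in\bar X$ — and this holds no matter which subset of $\partial\bar X$ lies in $X$ (for instance $X$ could be an open disc together with a Bernstein subset of its bounding circle). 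Finally, if $X$ carries a continuous convex $f\colon X\to\IR$ all of whose sublevel sets $\{z\in X:f(z)\le c\}$ are compact, then $\Phi(x)=\{z\in X:f(z)\le f(x)\}$ works, as $f\le\max\{f(x),f(y)\}$ on $[x,y]$ by convexity; such $f$ exist for every closed convex $X$ (take $f(z)=|z-x_0|$) and for every open convex $X\neq\IR^2$ (take $f(z)=|z-x_0|+1/\operatorname{dist}(z,\IR^2\setminus\bar X)$, using that the distance to the complement of a convex set is concave on that set).

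What remains is a $2$-dimensional $X$ containing some, but not all, of $\partial\bar X$; write $B=\bar X\setminus X\neq\emptyset$. Here the plane is essential. The formula $\tfrac12(x+\bar X)$ (with an appropriate truncation when $\bar X$ is unbounded) still does the job along every strictly convex arc of $\partial\bar X$, however wild $X$ is there, and can fail only along a flat face $F$ of $\bar X$; but $\partial\bar X$ is $1$-dimensional, so $F$ is a segment and $X\cap F$ is, by convexity of $X$, an \emph{interval} of $F$ — never a wild set — so near $F$ the set $X$ looks, in suitable affine coordinates, like a half-plane together with a sub-interval of its bounding line. On such a local model one replaces $\Phi(x)$ near $F$ by a compact convex ``wedge'' with apex at the relevant endpoint of $X\cap F$, wide enough to contain the near-halves of all segments from $x$ and narrow enough to meet $\partial\bar X$ only inside $X$, interpolating continuously to $\tfrac12(x+\bar X)$ away from $F$; no continuity is demanded as $x$ approaches the missing part $B$, since $B\cap X=\emptyset$. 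One then re-verifies $\Phi(x)\cap\Phi(y)\cap[x,y]\neq\emptyset$ in the mixed cases.

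The main obstacle is precisely this gluing: arranging the wedges and the interpolation to depend continuously on $x$, keep every $\Phi(x)$ a compact subset of $X$, and still cover all segments — especially at an endpoint of $X\cap F$ that is a limit point of $B$, where no fixed wedge fits and the apex must degenerate to a genuine cusp. This is also where the dimension matters: in $\IR^3$ a face $F$ of $\bar X$ is $2$-dimensional and $X\cap F$ may be an arbitrary $2$-dimensional convex set — with a wild piece of its own relative boundary — which no one-parameter, ``two-sided'' analysis can control; correspondingly $\IR^3$ contains convex sets that are not compactly convex.
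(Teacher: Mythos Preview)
Your preliminary observations and explicit formulas are correct and nicely motivated: the map $\Phi(x)=\tfrac12(x+\bar X)$ does the job when $\bar X$ is compact and strictly convex, and the sublevel-set trick handles closed and open $X$. The observation that $X\cap F$ is an interval for every flat face $F$ of $\partial\bar X$ is the right planar fact.

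However, the proposal stops precisely where the work begins. You yourself name the main obstacle---``arranging the wedges and the interpolation to depend continuously on $x$''---and then do not carry it out. Several concrete difficulties are left open. First, $\partial\bar X$ may have infinitely many flat faces accumulating on one another (a convex region bounded by infinitely many chords of a circle, say), and you give no mechanism for coordinating the wedge modifications across all of them simultaneously while keeping $\Phi$ continuous. Second, the unbounded case is handled only when $X$ is closed or open; the phrase ``with an appropriate truncation when $\bar X$ is unbounded'' is not a construction, and an unbounded $X$ with boundary points in $\partial\bar X$ (e.g.\ a half-plane union a ray on its edge) is not covered by your sublevel-set device. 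Third, even for a single face, the degeneration of the wedge ``to a genuine cusp'' as $x$ approaches an endpoint of $X\cap F$ that is a limit of $B$ is asserted, not built; one must also check that the resulting $\Phi(x)$ still meets $\Phi(y)$ on $[x,y]$ for \emph{every} $y$, including $y$ near a different face.

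By contrast, the paper avoids insisting on convex values and instead, for bounded $X$, builds $\Phi$ from a continuous \emph{pair} function $f:\bar X^2\setminus\Delta\to[0,1]$ with $f(x,y)+f(y,x)=1$, setting $\Phi(x)=\{(1-t)x+ty:y\in\bar X,\ 0\le t\le f(x,y)\}$; the delicate behaviour near flat faces is absorbed into the single scalar $f$, which is prescribed explicitly on pairs lying in $\partial\bar X$ via the relative length $\lambda(x,y)=|[x,y]\cap X|/|[x,y]|$ and then extended. The unbounded case is then reduced to the bounded one by finite and infinite Sum Theorems that glue compactly convex pieces across lines with controlled intersection. Your convex-valued, wedge-based strategy may well be completable, but as written it is a plan rather than a proof: the hard steps---global gluing across all faces and the genuinely unbounded, non-closed, non-open case---are identified but not executed.
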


This result is specific for low dimensional convex sets and in the dimension three we have a counterexample.

\begin{example}\label{ex1} There is a convex subset $X\subset\IR^3$ which is not compactly convex.
\end{example}

\begin{proof} Identify $\IR^3$ with the product $\IC\times\IR$ of the complex plane and the real line. Let $D=\{z\in\IC:|z|<1\}$ be the open unit disc and $\IT=\{z\in\IC:|z|=1\}$ be its boundary. Take any three pairwise disjoint dense subsets  $T_0,T_I,T_1$ in $\IT$. Let $I=[0,1]$ stand for the unit interval.

We claim that the convex subset
$$
X=(D\times I)\cup(T_I\times I)\cup(T_0\times\{0\})\cup(T_1\times\{1\})$$ of $\IC\times\IR$ fails to be compactly convex.

Assuming that it is compactly convex, find a (upper semi-) continuous compact-valued map  $\Phi:X\to\exp(X)$ such that $[x,y]\subset \Phi(x)\cup\Phi(y)$.  For every $x\in \IT$ and $t\in[0,1]$ let $x_t=(x,t)$. If $x\in T_I$, then $x_0,x_1\in X$ and hence $[x_0,x_1]\subset\Phi(x_0)\cup\Phi(x_1)$. Then $x_{1/2}\in\Phi(x_0)\cup \Phi(x_1)$ and thus $T_I=T^0_I\cup T^1_I$ where$$T_I^i=\{x\in T_I:x_{1/2}\in \Phi(x_i)\}$$for $i\in\{0,1\}$. Since $T_I^0\cup T_I^1$ is dense in $\IT$ there are $i\in\{0,1\}$ and a non-empty open set $U\subset\IT$ such that $U\cap T_I^i$ is dense in $U$. Take any point $y\in U\cap T_i$. Since $x_{1/2}\in\Phi(x)$ for every $x\in U\cap T^i_I$, the upper semicontinuity of $\Phi$ implies that $y_{1/2}\in \Phi(y)\subset X$, which is a contradiction because $y_{1/2}\notin X$.
\end{proof}

Theorem~\ref{main} will be proven in several steps. Firstly for bounded convex sets, then for locally compact, and finally with help of Sum Theorems, for all convex subsets of the plane. For constructing continuous compact-valued maps with given properties we shall often use

\begin{lemma}\label{l1}  Let $X$ be a normal topological space, $C$ be a contractible Tychonoff space, and $\{U_i\}_{i\in \I}$ be a locally finite open cover of $X$. Let $\{F_i\}_{i\in\I}$ be a family of closed subsets of $X$ such that $F_i\subset U_i$ for all $i\in\I$. Then for any continuous maps $\Phi_i:U_i\to\exp(C)$, $i\in \I$, there is a continuous map $\Phi:X\to \exp(C)$ such that $\Phi_i(x)\subset\Phi(x)$ for all $x\in F_i$, $i\in\I$.
\end{lemma}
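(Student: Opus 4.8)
The plan is to glue the local maps $\Phi_i$ into a single map by forming a union, after first ``damping'' each $\Phi_i$ to a constant value outside a neighbourhood of $F_i$, and it is here that contractibility of $C$ enters. Since $C$ is contractible, fix a continuous map $h\colon C\times[0,1]\to C$ and a point $c_0\in C$ with $h(c,0)=c$ and $h(c,1)=c_0$ for all $c\in C$. This induces a continuous map $\bar h\colon\exp(C)\times[0,1]\to\exp(C)$, $\bar h(K,t)=\{h(c,t):c\in K\}$, with $\bar h(K,0)=K$ and $\bar h(K,1)=\{c_0\}$ for every $K\in\exp(C)$ (continuity of $\bar h$ is a routine consequence of the continuity of $h$ and the definition of the Vietoris topology via the subbase $\langle U\rangle,\ \rangle U\langle$). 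Next, using normality of $X$, for each $i\in\I$ choose an open set $V_i$ with $F_i\subset V_i\subset\overline{V_i}\subset U_i$, and by Urysohn's lemma a continuous function $\lambda_i\colon X\to[0,1]$ with $\lambda_i\equiv1$ on $F_i$ and $\lambda_i\equiv0$ on $X\setminus V_i$.

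Now define $\Psi_i\colon X\to\exp(C)$ by pasting: on the open set $U_i$ put $\Psi_i(x)=\bar h(\Phi_i(x),1-\lambda_i(x))$, and on the open set $X\setminus\overline{V_i}$ put $\Psi_i(x)=\{c_0\}$. These two open sets cover $X$ because $\overline{V_i}\subset U_i$, and on their intersection $\lambda_i$ vanishes, so $\bar h(\Phi_i(x),1)=\{c_0\}$ and the two prescriptions agree; hence $\Psi_i$ is a well-defined continuous compact-valued map. Note that $\Psi_i(x)=\bar h(\Phi_i(x),0)=\Phi_i(x)$ for $x\in F_i$, while $\Psi_i(x)=\{c_0\}$ for every $x\notin\overline{V_i}$, and $\overline{V_i}$ is a closed set contained in $U_i$. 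Finally, set
$$\Phi(x)=\bigcup_{i\in\I}\Psi_i(x).$$
By local finiteness of $\{U_i\}$ the set $\{i\in\I:x\in\overline{V_i}\}$ is finite for each $x$, and $\Psi_i(x)=\{c_0\}$ for all other $i$, so $\Phi(x)$ is a union of finitely many non-empty compact sets, hence a non-empty compact subset of $C$. Moreover $\Phi(x)\supset\Psi_i(x)=\Phi_i(x)$ whenever $x\in F_i$, which is the required inclusion.

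The remaining point is continuity of $\Phi$. Given $x_0\in X$, pick an open neighbourhood $W\ni x_0$ meeting only finitely many sets $U_i$, say those with $i$ in a finite set $J$. For $i\notin J$ we have $W\cap\overline{V_i}\subset W\cap U_i=\emptyset$, so $\Psi_i\equiv\{c_0\}$ on $W$, and therefore on $W$ the map $\Phi$ coincides with the finite union $\{c_0\}\cup\bigcup_{i\in J}\Psi_i$ of continuous compact-valued maps. Since the union operation $\exp(C)\times\exp(C)\to\exp(C)$ is continuous in the Vietoris topology (again checked directly on the subbasic sets $\langle U\rangle$ and $\rangle U\langle$), the restriction $\Phi|_W$ is continuous; as continuity is local, $\Phi$ is continuous on $X$, completing the proof.

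I expect the only genuinely conceptual step to be the damping construction with $\bar h$: this is exactly what allows each $\Phi_i$ to ``die off'' to the single value $\{c_0\}$ outside a closed subset of $U_i$ without ever leaving $\exp(C)$ or breaking continuity, which in turn makes the a priori infinite union $\bigcup_i\Psi_i$ locally finite and hence continuous. The other ingredients — normality to shrink $U_i$ to $\overline{V_i}$, the pasting lemma for maps into a topological space, and continuity of $\bar h$ and of finite unions in the hyperspace — are standard and require no essential work.
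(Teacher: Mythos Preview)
Your proof is correct and follows essentially the same route as the paper's: both use the contraction of $C$ to damp each $\Phi_i$ down to the constant $\{c_0\}$ outside $U_i$ via a Urysohn function $\lambda_i$, and then take the (locally finite) union over $i\in\I$. Your write-up is simply more careful than the paper's---you explicitly introduce the shrunken sets $V_i$, verify the pasting, and check continuity locally---whereas the paper compresses all of this into the single formula $\Phi(x)=\bigcup_{i}H(\Phi_i(x),\lambda_i(x))$ and leaves well-definedness and continuity to the reader.
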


\begin{proof}  Using the normality of $X$, for every $i\in\I$ fix a continuous function $\lambda_i:X\to [0,1]$ such that $\lambda_i(F_i)\subset\{1\}$ and $\lambda_i(X\setminus U_i)\subset\{0\}$.

Since $C$ is contractible, there is a point $c_0\in C$ and a homotopy $h:C\times [0,1]\to C$  such that $h(c,0)=c_0$ and $h(c,1)=c$ for all $c\in C$. The homotopy $h$ induces a homotopy $H:\exp(C)\times[0,1]\to\exp(C)$ acting by the formula: $H(K,t)=\{h(x,t):x\in K\}$ for $(K,t)\in \exp(C)\times[0,1]$.

Finally, define the map $\Phi:X\to\exp(C)$ letting $$\Phi(x)=\bigcup_{i\in I}H(\Phi_i(x),\lambda_i(x))\mbox{ \ for \ $x\in X$.}$$ It is easy to check that this map is well-defined, continuous and has the desired property: $\Phi_i(x)\subset \Phi(x)$ for all $x\in F_i$, $i\in\I$.
\end{proof}

\section{The compact convexity of bounded convex sets}

The main result of this section is

\begin{lemma}\label{p1} Each bounded convex subset of the plane is compactly convex.
\end{lemma}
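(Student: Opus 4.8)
The strategy is to pass to the genuinely two-dimensional case and there exhibit $\Phi$ by a single ``bulk'' formula, correcting it only near those flat edges of $\bar X$ that are not entirely contained in $X$, and gluing the corrections by Lemma~\ref{l1}. If $\dim X\le 1$, then $X$ is affinely an interval (possibly degenerate) $I$; choosing an interior point $c$ of $I$ and putting $\Phi(x)=[x,c]$ yields a continuous compact-valued map with $[x,y]=[x,c]\cup[c,y]\subset\Phi(x)\cup\Phi(y)$, so from now on let $K:=\bar X$ be a compact convex body. Fix $x_0\in\mathrm{int}\,K$ and recall that $\mathrm{int}\,K\subset X\subset K$, that for every maximal nondegenerate segment (edge) $e\subset\partial K$ the intersection $e\cap X$ is a subsegment of $e$, and that at the exposed points of $\partial K$ membership in $X$ is unconstrained.

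Take as bulk map $\Phi_0(x)=\tfrac12(x+K)=\{\tfrac12(x+z):z\in K\}$; it is clearly continuous and compact-valued. For all $x,y\in X$ we have
$$[x,y]\subset\bigl(\tfrac12 x+\tfrac12[x,y]\bigr)\cup\bigl(\tfrac12 y+\tfrac12[y,x]\bigr)\subset\Phi_0(x)\cup\Phi_0(y),$$
so the only property left to secure for $\Phi=\Phi_0$ to work is $\Phi_0(x)\subset X$ for every $x\in X$. If $x\in\mathrm{int}\,K$, or if $x\in\partial K$ is exposed, then each midpoint $\tfrac12(x+z)$ with $z\ne x$ lies in $\mathrm{int}\,K\subset X$, whence $\Phi_0(x)\subset X$; this already proves the lemma whenever $X$ meets no edge of $\partial K$ in an interior point of that edge, in particular for $X=\mathrm{int}\,K$. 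The sole obstruction is therefore a point $x\in X$ in the relative interior of an edge $e$ together with a point $z\in e\setminus X$, for then $\tfrac12(x+z)$ lies on $e$ and may miss $X$.

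To repair this, fix such an edge $e$, write $J:=e\cap X$ (a nondegenerate subsegment of $e$) and fix a point $c_J$ in the relative interior of $J$. On a suitable open neighbourhood $U_e$ of $J$ in $X$ I modify $\Phi_0$ into a map $\Phi_e$ designed so that: (i) the trace of $\Phi_e(x)$ on $e$ stays inside $J$, guaranteeing $\Phi_e(x)\subset X$; (ii) $\Phi_e(x)$ contains the subsegment of $e$ from $x$ to $c_J$, so that segments $[x,y]$ with $x,y\in J$ are covered by the one-dimensional argument; and (iii) $\Phi_e(x)$ is fattened away from $e$ (for instance, $\Phi_0(x)$ is left untrimmed in the halfplane directions pointing off $e$, and/or a cone over $\Phi_0$ with apex $c_J$ is adjoined) so that for $y\notin e$ the union $\Phi_e(x)\cup\Phi_0(y)$ still covers $[x,y]$. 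I then apply Lemma~\ref{l1} with $C:=X$ — a convex, hence contractible, Tychonoff space — to the normal (indeed metrizable) space $X$, a locally finite open cover of $X$ by $\mathrm{int}\,K$ together with the (at most countably many, mutually ``independent'') neighbourhoods $U_e$, suitable closed sets inside them, and the maps $\Phi_0$ and $\Phi_e$; the resulting $\Phi:X\to\exp(X)$ dominates $\Phi_0$ on a closed set covering the ``good'' part of $X$ and dominates each $\Phi_e$ on a closed subset of $U_e$.

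It then remains to verify $[x,y]\subset\Phi(x)\cup\Phi(y)$ for all $x,y\in X$, by cases according to which of $x,y$ lie in the relative interior of a deficient edge. The cases where neither does, or where both lie on one deficient edge, follow respectively from the bulk estimate above and from the one-dimensional argument of (ii). I expect the remaining mixed case — $x$ in the relative interior of a deficient edge $e$ and $y$ an arbitrary point of $X$ — to be the crux: there $\Phi(x)$ is barred from reaching past $J$ along $e$, yet the chord $[x,y]$, when $y$ lies near the deficient part of $\partial K$, passes close to $e\setminus X$, so that the bulk contribution of $\Phi(x)$ falls short of the midpoint of $[x,y]$ and the fattening in (iii) must make up exactly this deficit while still keeping $\Phi_e$ compact-valued, continuous, and $X$-valued. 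Designing that fattening, and checking the resulting cover estimate, is where the real work of the proof lies.
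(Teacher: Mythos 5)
Your reduction to the two-dimensional case and your analysis of the ``bulk'' map $\Phi_0(x)=\tfrac12(x+\bar X)$ match the easy half of the paper's proof (which uses exactly this map when $\partial\bar X$ contains no nondegenerate segment). But from that point on your text is a plan, not a proof: the map $\Phi_e$ that is supposed to repair $\Phi_0$ near a deficient edge is never constructed, and you yourself flag the mixed case ($x$ in a deficient edge, $y$ arbitrary) as unresolved --- ``designing that fattening, and checking the resulting cover estimate, is where the real work of the proof lies.'' That is precisely the content of the lemma, so what you have is a genuine gap, not a completed argument. There are also two concrete flaws in the set-up. First, the obstruction set is misidentified: $\tfrac12(x+z)\in\partial\bar X$ whenever $x$ and $z$ lie on a common boundary segment, so an \emph{endpoint} of a deficient edge (which can be an exposed point) is just as problematic as a relative-interior point; your claim that exposed points are harmless is false, and the closed/open sets fed into Lemma~\ref{l1} must account for this. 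Second, local finiteness of the cover $\{U_0\}\cup\{U_e\}$ is not automatic: a planar convex body can have infinitely many edges accumulating at a boundary point $p$, each deficient and each meeting $X$ arbitrarily close to $p$; if $p\in X$, every neighbourhood of $p$ meets infinitely many $U_e$, and shrinking the $U_e$ away from $p$ leaves bad points covered only by $U_0$, where $\Phi_0$ is not $X$-valued. So even the gluing framework needs a nontrivial fix before the missing construction could be inserted.

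For comparison, the paper avoids local corrections and gluing altogether. It defines a single global map $\Phi(x)=\{x\}\cup\{ty+(1-t)x: y\in\bar C\setminus\{x\},\ 0\le t\le f(x,y)\}$, where $f:\bar C^2\setminus\Delta\to[0,1]$ is a continuous function with $f(x,y)+f(y,x)=1$, obtained by extending (via a Tietze-type extension that is strictly positive off the prescribed closed set) the function $\tfrac12\lambda(x,y)$, $\lambda(x,y)=|[x,y]\cap C|/|[x,y]|$, from the set of pairs whose segment lies in $\partial C$. The joint dependence of the reach $f(x,y)$ on both endpoints is what your pointwise repair cannot easily replicate: it simultaneously forces $\Phi(x)\subset C$ (along a boundary segment the reach is only half the relative length of $[x,y]\cap C$) and the covering property $[x,y]\subset\Phi(x)\cup\Phi(y)$ (from $f(x,y)+f(y,x)=1$), with continuity built in globally rather than patched edge by edge. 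If you want to salvage your approach, you would have to encode a similar two-variable quantitative control into $\Phi_e$; at that point you are essentially reconstructing the paper's function $f$.
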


\begin{proof} Let $C$ be a bounded convex subset of the plane, $\bar C$ be its  closure and $\partial C$ be its boundary in $\IR^2$. If the boundary $\partial C$ contains no non-degenerate interval, then the solution is simple: just consider the compact-valued map $\Phi:x\mapsto \frac12x+\frac12\bar C$ and observe that it turns $C$ into a compactly convex set.

If $\partial C$ contains non-degenerate intervals, then a more subtle construction will be supplied. In the square $\bar C^2$ consider the set $$D=\{(x,y)\in\bar C^2:[x,y]\subset\partial C,\; x\ne y,\; x\in C\}$$ and let $\bar D$ be the closure of $D$ in $\bar C^2\setminus\Delta$, where $\Delta=\{(x,y)\in\bar C^2:x=y\}$ is the diagonal of $\bar C^2$. Let $\bar D^{-1}=\{(x,y):(y,x)\in\bar D\}$.

Next, consider the (continuous) function $\lambda:\bar D\to[0,1]$ defined by $\lambda(x,y)=\frac{|[x,y]\cap C|}{|[x,y]|}$, where $|[x,y]|$ is the length of the interval $[x,y]$. This function yields the relative length of the intersection $[x,y]\cap C$ for a pair $(x,y)\in D$. Note that $\lambda(x,y)=1$ for any $(x,y)\in\bar D\cap\bar D^{-1}$.

It is well-known that for any continuous  function $\alpha:B\to[0,1]$ defined on a closed subset of a metrizable space $M$ admits a continuous extension $\bar \alpha:M\to[0,1]$ which is strictly positive on the complement $M\setminus B$.

Using this extension property of the metrizable space $\bar C^2\setminus\Delta$, find a continuous function $f:\bar C^2\setminus \Delta$ such that
\begin{itemize}
\item $f(x,y)=\frac12\lambda(x,y)$ for any $(x,y)\in\bar D$;
\item $f(x,y)=1-\frac12\lambda(y,x)$ for any $(x,y)\in\bar D^{-1}$;
\item $f(x,y)>0$ for any $(x,y)\in\bar C^2\setminus(\Delta\cup\bar D\cup\bar D^{-1})$.
\end{itemize}

Moreover, replacing $f(x,y)$ by $\frac{f(x,y)}{f(x,y)+f(y,x)}$, if necessary, we can additionally assume that $f(x,y)+f(y,x)=1$ for all $(x,y)\in\bar C^2\setminus\Delta$.

It follows from the choice of $f$ that $f(x,y)=0$ iff $(x,y)\in\bar D$ and $\lambda(x,y)=0$.

Now define a compact-valued map $\Phi:C\to\exp(\bar C)$ by
$$\Phi(x)=\{x\}\cup\{ty+(1-t)x:y\in \bar C\setminus\{x\},\; 0\le t\le f(x,y)\}.$$

It is easy to check that $\Phi:C\to\exp(\bar C)$ is a well-defined compact-valued map, continuous with respect to the Vietoris topology on the hyperspace of $\bar C$. The inclusion $[x,y]\subset\Phi(x)\cup\Phi(y)$ follows from the equality $f(x,y)+f(y,x)=1$ holding for all $x,y\in\bar C^2$, $x\ne y$.

Finally, let us show that $\Phi(x)\subset C$ for all $x\in C$. Take any $y\in \bar C\setminus\{x\}$ and a positive $t\le f(x,y)$. We should show that $ty+(1-t)x\in C$. First, we verify that $t<1$. Assuming that $t=1$, we get $f(x,y)=1$ and $f(y,x)=0$ which implies $(y,x)\in\bar D$ and $\lambda(y,x)=0$. Let $(a,b)\in D$ be a pair so close to the pair $(y,x)\in\bar D$ that the intersection $[x,y]$ and $[a,b]$ contains a non-degenerate interval. Since $a,x\in C$, the length $[y,x]\cap C\supset [y,x]\cap [a,x]$ is strictly positive and thus $\lambda(y,x)\ne0$, which is a contradiction.

Therefore, $t<1$. If $[x,y]\not\subset\partial C$, then $ty+(1-t)x$ lies in the interior of $C$.
Finally, consider the case $[x,y]\subset\partial C$. Then $t\le f(x,y)=\frac12\lambda(x,y)$ and thus $ty+(1-t)x\in [x,y]\cap C$.
\end{proof}

\section{The compact convexity of locally compact convex sets}

\begin{proposition}\label{p2} Each locally compact $\sigma$-compact convex subset $C$ of a linear topological space $L$ is compactly convex.
\end{proposition}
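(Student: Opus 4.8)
The plan is to reduce the statement to the two-dimensional case handled in Lemma~\ref{p1} by exploiting the fact that a locally compact $\sigma$-compact convex set, although it may sit inside an infinite-dimensional $L$, is in a suitable sense ``finite-dimensional at small scales'': every point has a compact convex neighbourhood, and a compact convex set in a linear topological space spans only a finite-dimensional flat (being compact, hence bounded, hence contained in a finite-dimensional affine subspace when it has nonempty relative interior in its span — more precisely, a locally compact convex set in $L$ is contained in a finite-dimensional affine subspace, since its closure is locally compact and a locally compact convex subset of a topological vector space is finite-dimensional).

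So first I would invoke this structure theorem: the closure $\bar C$ is locally compact and convex, hence $\overline C$ lies in a finite-dimensional subspace, so without loss of generality $L=\IR^n$ and $C$ is a locally compact $\sigma$-compact (equivalently, just closed, once we pass to $\bar C$ — but we must be careful: $C$ need not be closed, only locally compact, which for a convex subset of $\IR^n$ means $C$ is the union of the relative interior of $\bar C$ with a relatively open subset of its boundary) convex subset of $\IR^n$.

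Then, for $n\le 2$ Lemma~\ref{p1} already covers the bounded case; the general locally compact case in $\IR^2$ should follow by a gluing argument: write $C=\bigcup_k C_k$ as an increasing union of compact convex sets, build the maps $\Phi$ locally on compact convex pieces (each of which, being compact convex in the plane, is compactly convex by Lemma~\ref{p1}), and then patch them together with a locally finite partition-of-unity argument via Lemma~\ref{l1}, using that $\bar C$ is an AR (being convex) and hence contractible so that the hypothesis of Lemma~\ref{l1} on the target $C=\con$-space is met.

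The main obstacle I anticipate is the patching: Lemma~\ref{l1} lets one enlarge the values $\Phi_i(x)$ to a common continuous $\Phi(x)$, but it does not automatically preserve the segment-covering property $[x,y]\subset\Phi(x)\cup\Phi(y)$ for $x,y$ lying in different pieces $U_i,U_j$; one needs the local maps to be compatible on overlaps, or one needs a single global formula of the type $\Phi(x)=\{x\}\cup\{(1-t)x+ty : y\in\overline C,\ 0\le t\le f(x,y)\}$ as in Lemma~\ref{p1}, with $f$ chosen so that $f(x,y)+f(y,x)\ge 1$ and, crucially, so that $\Phi(x)\subset C$ — which for an unbounded or non-closed $C$ requires controlling how far toward the boundary (or toward infinity) the segment from $x$ is allowed to reach. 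I would therefore expect the actual proof to construct such a global $f$ directly on $\bar C\times\bar C$, using local compactness to get, around each $x$, a compact convex neighbourhood into which all the relevant truncated segments fall, and then to use Lemma~\ref{l1} only to globalize the choice of the ``radius function'' $f$ rather than the maps $\Phi_i$ themselves; getting $f(x,y)+f(y,x)\ge 1$ together with $\Phi(x)\subseteq C$ simultaneously is the delicate point.
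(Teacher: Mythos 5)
Your first step---the claim that a locally compact convex subset of a linear topological space lies in a finite-dimensional affine subspace---is false, and the whole reduction to $\IR^n$ (and then to the plane) collapses with it. The classical finite-dimensionality theorem applies to locally compact topological \emph{vector spaces} (equivalently, to convex sets with nonempty interior), not to arbitrary locally compact convex subsets: the Hilbert cube $\prod_{n}[0,1/n]\subset\ell^2$ is a compact, hence locally compact and $\sigma$-compact, convex set that spans an infinite-dimensional subspace. Proposition~\ref{p2} is stated for an arbitrary linear topological space precisely because no such reduction is available (nor is it needed). Moreover, even granting finite dimension, your plan relies on Lemma~\ref{p1} for the compact pieces, which is a statement about the plane only; for compact convex pieces this is beside the point anyway, since a compact convex set is trivially compactly convex via the constant map $\Phi(x)=K$.

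The second, and decisive, gap is the one you yourself flag and then leave open: Lemma~\ref{l1} enlarges local maps to a global continuous one but does not preserve the covering property $[x,y]\subset\Phi(x)\cup\Phi(y)$ for $x,y$ in different pieces, and your proposed fix (a global radius function $f$ on $\bar C^2$ as in Lemma~\ref{p1}) is neither carried out nor what is needed. The actual argument avoids the compatibility problem entirely by making \emph{single values} of $\Phi$ swallow whole segments: write $C=\bigcup_{n\in\w}C_n$ with each compactum $C_n$ contained in the interior $U_{n+1}$ of $C_{n+1}$ in $C$, note that $(U_{n+1}\setminus C_{n-2})_{n\in\w}$ is a locally finite open cover, and use Lemma~\ref{l1} to produce a continuous compact-valued $\Phi:C\to\exp(C)$ with $\Phi(x)\supset[C_n,C_n]=\bigcup_{a,b\in C_n}[a,b]$ whenever $x\in C_n\setminus C_{n-1}$ (this set is compact and lies in $C$ by convexity). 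Then for any $x,y\in C$ take the minimal $n$ with $\{x,y\}\subset C_n$; one of the two points, say $x$, lies in $C_n\setminus C_{n-1}$, so $[x,y]\subset[C_n,C_n]\subset\Phi(x)$, and no matching of local witnesses across overlaps, no radius function $f$, and no finite-dimensionality enter the proof.
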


\begin{proof}
Since $C$ is locally compact and $\sigma$-compact, we can write $C$ as the sum $C=\bigcup_{n\in\w} C_n$ of an increasing sequence of compacta such that each $C_n$ lies in the interior $U_{n+1}$ of $C_{n+1}$ in $C$. It will be convenient to put $C_n=\emptyset$ for $n<0$. Since $(U_{n+1}\setminus C_{n-2})_{n\in\w}$ is a locally finite open cover of $C$, we may apply Lemma~\ref{l1} to construct a continuous compact-valued map $\Phi:C\to\exp(C)$ such that for every $n\in\w$ and $x\in C_n\setminus C_{n-1}$ the set $\Phi(x)$ contains the set $[C_n,C_n]=\bigcup_{a,b\in C_n}[a,b]$.

 We claim that $\Phi$ witnesses the compact convexity of $C$. Indeed, given any points $x,y\in C$, find a minimal $n$ such that $\{x,y\}\subset C_n$. Then $x$ or $y$ belongs to $C_n\setminus C_{n-1}$. Without loss of generality, $x\in C_n\setminus C_{n-1}$. Then $\Phi(x)\supset [C_n,C_n]$ and hence $$[x,y]\subset[C_n,C_n]\subset\Phi(x)\subset\Phi(x)\cup\Phi(y).$$
\end{proof}

We do not know if the $\sigma$-compactness  is essential in the preceding proposition.

\begin{question} Is any locally compact convex subset of a linear topological space compactly convex?
\end{question}

\section{A Subspace Theorem for compactly convex sets}

\begin{theorem} A closed convex subset $C$ of a compactly convex metrizable set $X$ is compactly convex.
\end{theorem}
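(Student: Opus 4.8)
The plan is to realize $C$ as a retract of $X$ and then push the witness for $X$ through the retraction.

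Suppose we are given a retraction $r\colon X\to C$, i.e. a continuous map with $r|_C=\mathrm{id}_C$, and let $\Phi\colon X\to\exp(X)$ be a continuous compact-valued map witnessing the compact convexity of $X$. Define $\Psi\colon C\to\exp(C)$ by $\Psi(x)=r(\Phi(x))=\{r(z):z\in\Phi(x)\}$. Since $r$ is continuous and $\Phi(x)$ is compact, $\Psi(x)$ is a nonempty compact subset of $C$; and $\Psi$ is continuous, being the composition of $\Phi|_C\colon C\to\exp(X)$ with the induced continuous map $\exp(X)\to\exp(C)$, $K\mapsto r(K)$ (continuity of the latter for the Vietoris topologies follows from the identities $\{K:r(K)\subset U\}=\langle r^{-1}(U)\rangle$ and $\{K:r(K)\cap U\ne\emptyset\}=\rangle r^{-1}(U)\langle$). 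Finally, for $x,y\in C$ we have $[x,y]\subset C$ by convexity, so $r$ fixes $[x,y]$ pointwise, and hence $[x,y]=r([x,y])\subset r(\Phi(x)\cup\Phi(y))=\Psi(x)\cup\Psi(y)$. Thus $\Psi$ witnesses the compact convexity of $C$, and everything reduces to producing the retraction $r\colon X\to C$.

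Since $X$ is metrizable and $C$ is closed in $X$, such a retraction exists as soon as $C$ is an absolute retract. Here the hypotheses on $X$ enter more heavily: $C$ is convex, and its closure $\bar C$ in $L$ is a closed subset of $\bar X$, which is locally compact (as $X$ is compactly convex) and metrizable (as $X$ is metrizable, so all its compact subsets — hence $\bar X$ itself — are metrizable). Thus $\bar C$ is a locally compact, metrizable, convex set, and — being convex, hence connected, hence paracompact — it is $\sigma$-compact. A locally compact metrizable convex set is an AR, and from this one should derive that $C$, a convex set with locally compact metrizable closure, is itself an AR (it is at least locally contractible via contractions toward points, is separable, and its closure is locally compact and $\sigma$-compact), whence the desired retraction $r\colon X\to C$.

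The main obstacle is precisely this last point: producing $r\colon X\to C$, equivalently showing that $C$ is an AR, is delicate because $L$ need not be locally convex, so the Dugundji-type extension machinery does not apply directly and general convex subsets of metrizable linear topological spaces need not be ARs. The local compactness of $\bar C$, forced by the compact convexity of $X$, is what should rescue the argument — one reduces to a locally compact convex set, where Hilbert-cube-type structure is available, and then passes from $\bar C$ back to $C$. Should this route prove awkward, the fallback is to dispense with retractions and build $\Psi$ by hand, upgrading the upper semicontinuous compact-valued map $x\mapsto\Phi(x)\cap C$ — which already takes values in $\exp(C)$, contains $x$ in $\Psi_0(x)$, and satisfies $[x,y]\subset\Psi_0(x)\cup\Psi_0(y)$ for $x,y\in C$ — to a Vietoris-continuous map, gluing local enlargements by means of Lemma~\ref{l1} with the contractible target $C$.
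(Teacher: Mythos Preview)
Your approach is exactly the paper's: take a retraction $r:X\to C$, push $\Phi$ through it via $\exp(r)$, and check that $\exp(r)\circ\Phi|_C$ witnesses the compact convexity of $C$. The paper disposes of the existence of $r$ in a single line by citing Dugundji's theorem to declare the closed convex set $C$ an absolute retract---it does not go through the local-compactness detour you sketch, nor does it address your (in principle legitimate) worry about local convexity, so your fallback construction is not needed to match the paper's argument.
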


\begin{proof} Let $\Phi:X\to\exp(X)$ be a continuous compact-valued map witnessing the compact convexity of $X$. By the Dugunji Theorem \cite{Dug}, the closed convex set $C$ is an absolute retract. Consequently, there is a retraction $r:X\to C$ inducing a retraction $\exp(r):\exp(X)\to\exp(C)$. Then the compact-valued map $\exp(r)\circ\Phi|C:C\to\exp(C)$ witnesses the compact convexity of $C$.
\end{proof}

We do not know if the metrizability is essential in the preceding proposition.

\begin{question} Is any closed convex subset of a compactly convex set compactly convex?
\end{question}

\section{Sum Theorems for compactly convex sets}

\begin{theorem}\label{t2} Let $P_-,P_+$ be two closed half-spaces in a linear topological space $L$, intersecting by a hyperplane $l=P_-\cap P_+$. A convex subset $C\subset L$ is compactly convex if  the sets $C\cap P_-$ and $C\cap P_+$ are compactly convex and there is a metrizable locally compact convex subset $\tilde C\supset C$ such that $\tilde C\cap l=C\cap l$.
\end{theorem}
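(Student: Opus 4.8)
The goal is to build a continuous compact-valued map $\Phi:C\to\exp(C)$ with $[x,y]\subset\Phi(x)\cup\Phi(y)$ from the two given maps $\Phi_-:C\cap P_-\to\exp(C\cap P_-)$ and $\Phi_+:C\cap P_+\to\exp(C\cap P_+)$, together with a map coming from the locally compact metrizable "envelope" $\tilde C$. The natural first move is to invoke Proposition \ref{p2} (applied after noting $\tilde C$, being metrizable and locally compact, is $\sigma$-compact — or more carefully, that $C\cap l\subset\tilde C\cap l$ is a closed locally compact metrizable convex set, hence $\sigma$-compact and compactly convex) to get a continuous compact-valued map $\Psi:\tilde C\to\exp(\tilde C)$ witnessing compact convexity of $\tilde C$, or at least of the slab $\tilde C\cap l$. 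The point of the $\tilde C\cap l=C\cap l$ hypothesis is that the "interface" $C\cap l$ is locally compact and metrizable even though $C$ itself need not be, so we can control pairs $x,y$ that are separated by the hyperplane $l$ by pushing the segment's crossing point into a compact set living in $C\cap l$.

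The main construction: given $x\in C$, I would set $\Phi(x)$ to be the union of (i) $\Phi_-(x)$ if $x\in P_-$, (ii) $\Phi_+(x)$ if $x\in P_+$, and (iii) a "bridge" term handling segments $[x,y]$ with $x\in P_-\setminus l$, $y\in P_+\setminus l$: such a segment meets $l$ in a single point $m(x,y)$, and one wants $[x,m(x,y)]\subset\Phi(x)$, $[m(x,y),y]\subset\Phi(y)$. The trouble is that $m(x,y)$ depends on $y$, so to make $\Phi(x)$ compact one cannot simply take the union over all $y$. This is where $\tilde C$ enters: the set $\{m(x,y):y\in C\cap P_+\}$ is contained in $\tilde C\cap l=C\cap l$, and one enlarges it to a compact subset $K(x)\subset C\cap l$ using local compactness of $C\cap l$ together with a cone-type construction, then applies $\Psi$ (or its restriction to $C\cap l$) to get a compact set in $C\cap l$ covering all the relevant crossing points. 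Concretely I expect $\Phi(x)$ on $P_-$ to look like $\Phi_-(x)\cup\bigcup\{[x,z]:z\in K(x)\}\cup(\text{something from }\Psi)$, arranged so that for $x\in P_-\setminus l$, $y\in P_+\setminus l$ the crossing point $m(x,y)$ lies in $K(x)\cap K(y)$ and the two half-segments are absorbed.

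To glue the pieces across $l$ continuously I would invoke Lemma \ref{l1} with the locally finite (two-element, after thickening) open cover $\{C\setminus P_+,\ C\setminus P_-,\ \text{a neighborhood of }C\cap l\}$ of $C$ — noting $C$ is normal, being a subset of a linear topological space that... actually one needs paracompactness/normality of $C$; here metrizability is not assumed of $C$, so I would instead do the gluing by hand, defining $\Phi$ separately on $P_-\cap C$ and on $P_+\cap C$ and checking the definitions agree on $C\cap l$ and that continuity holds there because near $l$ the bridge term degenerates into $\Psi$-controlled compacta in $C\cap l$. The verification splits into cases by the position of $x,y$ relative to $P_-,P_+$: both in $P_-$ (use $\Phi_-$), both in $P_+$ (use $\Phi_+$), one in each open half (use the bridge and $m(x,y)\in K(x)\cap K(y)$), with the boundary case $x$ or $y\in l$ handled by continuity/closedness.

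The hardest step, I expect, is making the bridge term $K(x)$ (the compact subset of $C\cap l$ "seen" from $x$) both compact-valued and Vietoris-continuous in $x$ as $x$ ranges over $C\cap P_-$ and degenerates to $l$. This requires genuinely using the locally compact metrizable structure of $\tilde C$ near $l$: the set of crossing points $\{m(x,y):y\}$ need not be closed in $C$, but its closure in $\tilde C$ lies in $\tilde C\cap l=C\cap l$, so it is a compact (after localizing) subset of $C$; controlling how this varies with $x$, and simultaneously feeding it through the compactly-convex witness $\Psi$ of $C\cap l$ so that whole segments $[x,z]$ get covered, is the technical core. Everything else — the three-case check of the segment inclusion and routine continuity of unions of segments parametrized continuously — is bookkeeping.
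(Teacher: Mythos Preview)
Your overall architecture is right --- combine the two given witnesses with a ``bridge'' handling segments that cross $l$ --- but two points need correction.

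First, a small oversight: since $C\subset\tilde C$ and $\tilde C$ is metrizable, $C$ itself is metrizable. So $C$ is normal and Lemma~\ref{l1} applies directly; moreover the closed convex subsets $C\cap P_\pm$ are retracts of $C$ by Dugundji, which lets you extend the partial witnesses $\Phi_\pm$ to a single continuous map $\Psi:C\to\exp(C)$ with $[x,y]\subset\Psi(x)\cup\Psi(y)$ whenever $x,y$ both lie in the same half. There is no need to glue by hand.

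Second, and more seriously, your bridge term $K(x)$ cannot work as described. You want $K(x)\subset C\cap l$ compact with $m(x,y)\in K(x)\cap K(y)$ for \emph{every} $y$ on the far side; but for a single $x\in P_-\setminus l$ the set of crossing points $\{m(x,y):y\in C\cap P_+\}$ can already be all of $C\cap l$ (think $C=\IR^2$), which is not compact. The parenthetical ``after localizing'' does not rescue this. The paper's way around it is to abandon the requirement that each half-segment $[x,m(x,y)]$ sit inside $\Phi(x)$; instead it uses $\Psi$ again on each half-segment, getting $[x_0,z]\subset\Psi(x_0)\cup\Psi(z)$ and $[z,x_1]\subset\Psi(z)\cup\Psi(x_1)$, so only the single compact set $\Psi(z)$ must be absorbed. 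That absorption is organized by a compact exhaustion $\tilde C=\bigcup_n C_n$: for $x\in C_n\setminus C_{n-1}$ one sets $\Phi(x)\supset\Psi\big([C_n,C_n]\cap l\big)$, which is compact because $[C_n,C_n]\cap l\subset\tilde C\cap l=C\cap l$ is compact. For any pair $x_0,x_1$ the one at the higher level $n$ then swallows $\Psi(z)$, since $z\in[C_n,C_n]\cap l$. This level-based trick is the missing idea in your plan.
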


\begin{proof} There is noting to prove if $C\subset C\cap P_-$ or $C\subset C\cap P_+$.
So we assume that both the sets $C\setminus P_-$ and $C\setminus P_+$ are not empty and so is the intersection $C\cap l$.

Being convex closed subsets of the metrizable space $C$, the sets $C\cap P_-$ and $C\cap P_+$ are retracts in $C$, see \cite{Dug}. This fact and the compact convexity of the sets $C\cap P_-$ and $C\cap P_+$ allow us to construct a continuous map $\Psi:C\to\exp(C)$ such that
$[x,y]\subset \Psi(x)\cup\Psi(y)$ for every points $x,y\in C$ that both belong either to $C\cap P_-$ or to $C\cap P_+$.

Being locally compact, the convex set $\tilde C$ can be written as the union $\tilde C=\bigcup_{n\in\w}C_n$ of a sequence  of compact subsets $C_n$, $n\in\w$, such that $C_0\cap l\ne\emptyset$ and each set $C_n$ lies in the interior of $C_{n+1}$ in $\tilde C$. It follows that for every $n\in\w$ the set $[C_n,C_n]=\{(1-t)a+tb:t\in[0,1],\;a,b\in C_n\}$ is compact and so is the intersection $[C_n,C_n]\cap l\subset\tilde C\cap l=C\cap l$.

Using Lemma~\ref{l1} we can construct a continuous map $\Phi:C\to\exp(C)$ such that for every $x\in C$
\begin{itemize}
\item $\Phi(x)\supset \Psi(x)$;
\item $\Phi(x)\supset \Psi([C_n,C_n]\cap l)$ provided $x\in C_n\setminus C_{n-1}$, $n\in\w$.
\end{itemize}

We claim that $[x_0,x_1]\subset\Phi(x_0)\cup\Phi(x_1)$ for any $x_0,x_1\in C$. This is clear if  both the points $x_0,x_1$ lie in $C\cap P_-$ or $C\cap P_+$. So we assume that $x_0\in C\cap P_-$ and $x_1\in C\cap P_+$. It follows that the intersection $[x_0,x_1]\cap l$ contains some point $z$. The choice of the function $\Psi$  guarantees that $[x_0,z]\subset\Psi(x_0)\cup\Psi(z)$ and $[z,x_1]\subset \Psi(z)\cup\Psi(x_1)$.

Find a minimal $n$ such that $\{x_0,x_1\}\subset C_n$. Then for some $i\in\{0,1\}$ the point $x_i$ belongs to $C_n\setminus C_{n-1}$ and hence $\Phi(x_i)\supset \Psi([C_n,C_n]\cap l)\supset\Psi(z)$. Now we see that
$$
\begin{aligned}
{[x_0,x_1]}=[x_0,z]\cup[z,x_1]\subset &\; \Psi(x_0)\cup\Psi(z)\cup\Psi(z)\cup\Psi(x_1)\subset\\
\subset&\; \Phi(x_0)\cup\Phi(x_i)\cup\Phi(x_1)=\Phi(x_0)\cup \Phi(x_1).
\end{aligned}$$
\end{proof}

\begin{corollary}\label{cc} A convex subset $C\subset \IR^2$ is compactly convex provided $C$ is the union of two closed convex compactly convex subsets whose intersection lies on a line.
\end{corollary}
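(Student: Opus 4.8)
The plan is to apply Theorem~\ref{t2} with $P_-,P_+$ the two closed half-planes bounded by the line $l$ that contains $C_1\cap C_2$, where $C=C_1\cup C_2$ with $C_1,C_2$ closed in $C$, convex and compactly convex; write $H_-,H_+$ for the corresponding open half-planes. Two things must then be arranged: that $C\cap P_-$ and $C\cap P_+$ are compactly convex, and that there is a metrizable locally compact convex set $\tilde C\supseteq C$ with $\tilde C\cap l=C\cap l$.

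First I would dispose of the degenerate cases. If $C$ is contained in a line, it is a convex --- hence locally compact and $\sigma$-compact --- subset of a line, so Proposition~\ref{p2} applies. Otherwise $C$ has nonempty interior. Suppose $C$ lies in one half-plane, say $C\subseteq P_-$; then $C\cap H_-$ is a nonempty convex, hence connected, set, and it is the disjoint union $(C_1\cap H_-)\sqcup(C_2\cap H_-)$ of two relatively closed subsets, so one of the $C_i$ lies in $l$, say $C_1$. Now observe that if $p\in C_1\setminus C_2$ and $q\in C_2$, then $p\notin\overline{C_2}$ (since $C_2$ is closed in $C$ and $p\in C$), so $p$ can be strictly separated from the closed convex set $\overline{C_2}$; this forces a neighbourhood of $p$ in the segment $[p,q]\subseteq C_1\cup C_2$ to lie in $C_1\subseteq l$, whence the whole segment lies in $l$ and $q\in l$. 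Consequently either $C_1\subseteq C_2$ --- so $C=C_2$ is compactly convex --- or $C_2\subseteq l$, or $C_2=\emptyset$; and the last two possibilities would force $C\subseteq l$, contrary to assumption. So I may assume $C$ is contained in neither a line nor a half-plane. Running the connectedness argument on both $C\cap H_-$ and $C\cap H_+$, and invoking the observation above once more to exclude $C_i\subseteq l$, I may assume after relabeling that $C_1\subseteq P_-$, $C_2\subseteq P_+$, $C_1\not\subseteq l$, $C_2\not\subseteq l$. Fixing $a\in C_1\cap H_-$, for every $q\in C_2\cap l$ the half-open segment $[a,q)$ lies in $C\cap H_-=C_1\cap H_-$, so $q\in\overline{C_1}\cap C=C_1$; hence $C_2\cap l\subseteq C_1$ and $C\cap P_-=C_1$, and symmetrically $C\cap P_+=C_2$. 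This gives the first hypothesis of Theorem~\ref{t2}.

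The remaining step --- building $\tilde C$ --- is the one I expect to be the main obstacle. Since $C$ has nonempty interior, $\overline C$ (closure in $\IR^2$) is a $2$-dimensional closed convex set; and since $C$ meets both open half-planes, a limiting argument (fix a point of $C$ strictly on one side of $l$ and examine where the segments joining it to points of $C$ near a given point of $\overline C\cap l$ cross $l$) shows that $\overline C\cap l=\overline{C\cap l}$. Thus $(\overline C\cap l)\setminus(C\cap l)$ consists of at most the two endpoints of the interval $\overline C\cap l$, and each such endpoint $e$ that is missing from $C$ lies in $\overline C\setminus C$. For each such $e$ I would choose an arc $A_e\subseteq\partial\overline C$ as follows: if $e$ is an extreme point of $\overline C$, put $A_e=\{e\}$; otherwise $e$ lies in the relative interior of a maximal boundary segment $F$, the convex set $C\cap F$ misses $e$ and hence lies in one of the two halves of $F$, and I take $A_e$ to be the closure of the other half --- a subsegment of $F$ from $e$ to an endpoint of $F$, or an unbounded ray of $F$ if $\overline C$ is unbounded. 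Each $A_e$ is closed, disjoint from $C$, and meets $l$ exactly in $\{e\}$; and since the endpoints of maximal boundary segments of $\overline C$ are extreme points of $\overline C$, deleting $A_e$ from $\overline C$ preserves convexity. Setting $S=\bigcup_e A_e$ and $\tilde C=\overline C\setminus S$, the set $\tilde C$ is convex, contains $C$, satisfies $\tilde C\cap l=(\overline C\cap l)\setminus S=C\cap l$, and is locally compact because $\overline{\tilde C}\setminus\tilde C=S$ is closed; it is metrizable as a subspace of $\IR^2$. Theorem~\ref{t2} then yields the compact convexity of $C$. The delicate point here is to verify simultaneously that $S$ is closed, that $\overline C\setminus S$ remains convex, and that its trace on $l$ is exactly $C\cap l$; the rest is routine.
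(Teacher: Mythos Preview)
Your argument is correct, and the overall strategy --- reduce to Theorem~\ref{t2} by producing a metrizable locally compact convex $\tilde C\supseteq C$ with $\tilde C\cap l=C\cap l$ --- is exactly the paper's. You are in fact more careful than the paper on one point: Theorem~\ref{t2} requires that $C\cap P_-$ and $C\cap P_+$ be compactly convex, not merely that the two given pieces $C_1,C_2$ be, and the paper passes over this identification in silence. Your connectedness argument on $C\cap H_\pm$, together with the disposal of the degenerate cases $C\subset l$ and $C_i\subset l$, is precisely what is needed to justify $C\cap P_-=C_1$ and $C\cap P_+=C_2$.

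The genuine difference lies in the construction of $\tilde C$. The paper works \emph{extrinsically}: for each boundary point $b$ of $C\cap l$ in $l$ it picks, via the geometric Hahn--Banach theorem, a closed supporting half-plane $P_b$ for $C$ at $b$; if $b\notin C$ it then deletes from $P_b$ the closed half-line of $\partial P_b$ through $b$ that misses $C$, obtaining a locally compact convex set $\tilde P_b$. The set $\tilde C$ is the intersection of these (at most two) $\tilde P_b$'s. Local compactness and the trace condition $\tilde C\cap l=C\cap l$ are then almost immediate, since half-planes are transparent objects and $\partial P_b\cap l=\{b\}$. Your construction is \emph{intrinsic}: you begin with $\overline C$ and, for each missing endpoint $e$ of $\overline C\cap l$, excise the half of the maximal boundary segment through $e$ that avoids $C$ (or just $\{e\}$ if $e$ is extreme). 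This also works --- the crucial point being that removing from a planar closed convex set a segment $[e,v]$ contained in a face, with $v$ an extreme point, preserves convexity --- and has the pleasant feature that $\tilde C\subset\overline C$; but, as you rightly flag, it requires verifying that $S$ is closed, that $\overline C\setminus S$ stays convex, and that $(\overline C\setminus S)\cap l=C\cap l$. Both constructions ultimately remove the same bad endpoints from $l$; the paper simply enlarges first to half-planes before cutting, which makes the bookkeeping lighter.
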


\begin{proof} Let $C_-,C_+$ be closed compactly convex subsets of $C$ whose union coincides with $C$ and the intersection $C_-\cap C_+$ lies on some line $l$.
The compact convexity of the set $C$ will follow from Theorem~\ref{t2} as soon as we find a locally compact set $\tilde C\subset\IR^2$ such that $C\subset\tilde C$ and $C\cap l=\tilde C\cap l$.

Let $B$ be the boundary of the set $C\cap l$ in $l$. It is clear that $|B|\le 2$. For every point $b\in B$ use the geometric form of the Hahn-Banach Theorem to find a closed half-plane $P_b$ such that $C\subset P_b$ and $b$ belongs to the boundary line $L_b$ of $P_b$. Write the line $L_b$ as the disjoint union $L_b=L^0_b\cup L_b^1$ of two closed half-lines with $L_b^0\cap L^1_b=\{b\}$. If $b\in C$, then put $\tilde P_b=P_b$.
If $b\notin C$, then $L_b^i\cap C=\emptyset$ for some $i\in\{0,1\}$ (by the convexity of $C$). In this case let $\tilde P_b=P_b\setminus L_b^i$. In both the cases the set $\tilde P_b$ is locally compact. The same is true for the intersection $\tilde C=\bigcap_{b\in B}\tilde P_b$. It is clear that $C\subset\tilde C$ and $\tilde C\cap l=C\cap l$.
\end{proof}

In the sequel we shall also need the Infinite Sum Theorem:

\begin{theorem}\label{t3} A convex normal subspace $C$ of a linear topological space $L$  is compactly convex if $C$ is the union of an increasing sequence $(C_n)_{n\in\w}$ of closed compactly convex subsets of $C$ such that  each set $C_n$ lies in the interior of $C_{n+1}$ and has compact boundary $\partial C_n$ in $C$.
\end{theorem}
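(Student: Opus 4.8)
The plan is to glue the witnesses of the pieces $C_n$ together with Lemma~\ref{l1}, exactly as in Proposition~\ref{p2}, but adding a compact ``correction term'' that compensates for the possible non-compactness of the $C_n$. For each $n\in\w$ I would fix a continuous compact-valued map $\Phi_n:C_n\to\exp(C_n)$ witnessing the compact convexity of $C_n$, with the convention $C_n=\partial C_n=\emptyset$ for $n<0$. Since every segment $[a,b]$ with $a,b\in C_n$ lies in $C_n\subset C_{n+1}$, we also have $[a,b]\subset\Phi_{n+1}(a)\cup\Phi_{n+1}(b)$; and it is the witness $\Phi_{n+1}$ \emph{one layer up} that will be used on the ``layer'' $C_n\setminus\mathrm{int}_C(C_{n-1})$ (interiors taken in $C$), the point being that $\Phi_{n+1}$, unlike $\Phi_n$, is defined on the whole open set $\mathrm{int}_C(C_{n+1})$ containing that layer. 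Using the compactness of the boundaries $\partial C_i$ and the continuity of the $\Phi_{l+1}$, I would introduce the increasing sequence of compact sets
$$B_m=\bigcup_{l=0}^m\ \bigcup_{i\in\{l-1,l,l+1\}}\Phi_{l+1}(\partial C_i)\subset C_{m+1},\qquad m\in\w .$$

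Next I would introduce the open cover $\{U_m\}_{m\in\w}$ of $C$ by $U_m=\mathrm{int}_C(C_{m+1})\setminus C_{m-2}$ together with the closed sets $F_m=C_m\setminus\mathrm{int}_C(C_{m-1})$; a direct check shows that $\{U_m\}$ is a locally finite open cover, that $F_m\subset U_m$, and that $\bigcup_m F_m=C$. Since $C$ is normal and convex (hence a contractible Tychonoff space), Lemma~\ref{l1} applied to the continuous compact-valued maps $U_m\to\exp(C)$, $x\mapsto\Phi_{m+1}(x)\cup B_m$, yields a continuous compact-valued map $\Phi:C\to\exp(C)$ with $\Phi(x)\supset\Phi_{m+1}(x)\cup B_m$ for all $x\in F_m$, $m\in\w$.

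The last and essential step is to verify that $[x,y]\subset\Phi(x)\cup\Phi(y)$ for arbitrary $x,y\in C$. Pick the least $m$ with $\{x,y\}\subset C_m$; then one of the points, say $x$, lies in $C_m\setminus C_{m-1}\subset F_m$, and $y$ lies in $C_k\setminus C_{k-1}\subset F_k$ for the least $k\le m$ with $y\in C_k$. If $k=m$, then $[x,y]\subset C_{m+1}$, hence $[x,y]\subset\Phi_{m+1}(x)\cup\Phi_{m+1}(y)\subset\Phi(x)\cup\Phi(y)$. If $k<m$, parametrize the segment from $y$ to $x$ by $\gamma(t)=(1-t)y+tx$ and, for $k\le j\le m-1$, put $t_j=\sup\{t:\gamma(t)\in C_j\}<1$ and $z_j=\gamma(t_j)\in\partial C_j$; since $0\le t_k\le\dots\le t_{m-1}<1$, convexity of the sets $C_j$ gives
$$[x,y]=[y,z_k]\cup[z_k,z_{k+1}]\cup\dots\cup[z_{m-2},z_{m-1}]\cup[z_{m-1},x]$$
with $[y,z_k]\subset C_{k+1}$, $[z_j,z_{j+1}]\subset C_{j+1}$ and $[z_{m-1},x]\subset C_{m+1}$. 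Then $[y,z_k]\subset\Phi_{k+1}(y)\cup\Phi_{k+1}(\partial C_k)\subset\Phi(y)\cup B_k\subset\Phi(y)$; every middle piece satisfies $[z_j,z_{j+1}]\subset\Phi_{j+1}(\partial C_j)\cup\Phi_{j+1}(\partial C_{j+1})\subset B_j\subset B_m\subset\Phi(x)$; and $[z_{m-1},x]\subset\Phi_{m+1}(\partial C_{m-1})\cup\Phi_{m+1}(x)\subset B_m\cup\Phi_{m+1}(x)\subset\Phi(x)$. Thus $[x,y]\subset\Phi(x)\cup\Phi(y)$, completing the proof.

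I expect the real difficulty to be the bookkeeping of the correction sets $B_m$: one is forced onto the witness $\Phi_{m+1}$ rather than $\Phi_m$ on the layer $F_m$ (because $\Phi_m$ is not defined on a neighbourhood of $\partial C_m$), and then a single compact set must simultaneously absorb all the finitely many ``crossing contributions'' $\Phi_{j+1}(\partial C_j)$, $\Phi_{j+1}(\partial C_{j+1})$ and $\Phi_{m+1}(\partial C_{m-1})$ obtained by cutting $[x,y]$ at its exit points from the tower $C_k\subset\dots\subset C_{m-1}$. The verifications that $\{U_m\}$ is locally finite, that $F_m\subset U_m$, and that each $B_m$ is compact — the latter being exactly where the compactness of the boundaries $\partial C_n$ is used — are routine.
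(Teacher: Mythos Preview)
Your proof is correct and follows essentially the same strategy as the paper's: apply Lemma~\ref{l1} to the layers of the tower $(C_n)$, using the witness $\Phi_{n+1}$ one level up together with a compact correction term built from the compact boundaries, and then verify the inclusion $[x,y]\subset\Phi(x)\cup\Phi(y)$ by cutting the segment at its exit points $z_j\in\partial C_j$. The only cosmetic difference is in the choice of correction term: the paper absorbs the middle pieces $[z_{i-1},z_i]$ directly into the compact sets $[\partial C_{i-1},\partial C_i]$, whereas you route them through $\Phi_{j+1}(\partial C_j)\cup\Phi_{j+1}(\partial C_{j+1})\subset B_m$; both choices work for the same reason and your version is, if anything, more explicit about the open cover and closed sets fed into Lemma~\ref{l1}.
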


\begin{proof} For every $n\in\w$ the compact convexity of $C_n$ yields us a  continuous compact-valued map $\Phi_n:C_{n}\to\exp(C)$ such that $[x,y]\subset \Phi_n(x)\cup\Phi_n(y)$ for all $x,y\in C_{n}$. For $n<0$ we put $C_n=\emptyset$.

Using Lemma~\ref{l1}, construct a continuous compact-valued function $\Phi:C\to\exp(C)$ such that for every $n\in\w$ and $x\in C_n\setminus C_{n-1}$ the following conditions are satisfied:
\begin{enumerate}
\item $\Phi(x)\supset\Phi_{n+1}(x)$;
\item $\Phi(x)\supset\Phi_{n+1}(\partial C_n\cup\partial C_{n-1})$;
\item $\Phi(x)\supset[\partial C_{k-1},\partial C_{k}]$ for all $k<n$.
\end{enumerate}

We claim that $\Phi$ witnesses the compact convexity of $X$. Take any two points $x,y\in C$ and find numbers $k,n\in\w$ with $x\in C_k\setminus C_{k-1}$ and $y\in C_n\setminus C_{n-1}$. Without loss of generality, $k\le n$. We should prove that $[x,y]\subset \Phi(x)\cup\Phi(y)$. Observe that for every $i\in\w$ with $k\le i<n$ the intersection of the interval $[x,y]$ with the boundary $\partial C_i$ contains some point $z_i$. Taking into account the three properties of the map $\Phi$ we conclude that
$$\begin{aligned}
{[x,y]}=&\;[x,z_k]\cup\big(\bigcup_{i=k+1}^{n-1}[z_{i-1},z_{i}]\big)\cup[z_{n-1},y]\subset\\
\subset&\;\Phi_{k+1}(x)\cup\Phi_{k+1}(\partial C_k)\cup\big(\bigcup_{i=k+1}^{n-1}[\partial C_{i-1},\partial C_{i}]\big)\cup \Phi_{n+1}(\partial C_{n-1})\cup \Phi_{n+1}(y)\subset\\
\subset&\;\Phi(x)\cup\Phi(y).
\end{aligned}
$$
\end{proof}

\section{Proof of Main Theorem~\ref{main}}

Let $C$ be a convex subset of the plane and $\overline{C}$ be its closure. If $C$ is locally compact, then $C$ is compactly convex by Proposition~\ref{p2}.

So, we can assume that $C$ is not locally compact and consider the set  $N=\{c\in C:C$ is not locally compact at $c\}$. If the set $N$ is bounded, then we can write $C$ as the union  $C=C_0\cup C_1\cup C_2\cup C_3$ of closed convex subsets of $C$ such that $C_0\supset N$ is bounded, $C_1,C_2,C_3$ are locally compact and $\bigcup_{i=0}^nC_i$ are convex for $n\le 3$:

\begin{picture}(300,250)

\put(20,50){\line(1,0){280}}
\put(200,50){\line(0,1){180}}
\put(100,50){\line(1,1){100}}

\put(150,80){$C_0\supset N$}
\put(150,20){$C_3$}
\put(100,140){$C_1$}
\put(250,140){$C_2$}
\end{picture}

By Lemma~\ref{p1}, the set $C_0$ is compactly convex being bounded and by Proposition~\ref{p2}, the convex locally compact sets $C_1,C_2,C_3$ are compactly convex. Applying Corollary~\ref{cc} three times we conclude that the sets $C_0\cup C_1$, $(C_0\cup C_1)\cup C_2$ and finally $C=((C_0\cup C_1)\cup C_2)\cup C_3$ are compactly convex.

Next, we consider the case of unbounded $N$.
In this case $C$ is two dimensional and has  infinite boundary $\partial C$. Shifting $C$, if necessary, we can assume that zero is an interior point of $C$. Consider the convex cone $\Cone(\overline{C})=\{\vec v\in\IR^2: [0,+\infty)\vec v\subset \overline{C}\}$ of $\overline{C}$. We claim that $\Cone(\overline{C})$ is unbounded and contains no line. Indeed, assuming that $\con(\overline{C})$ is bounded we would conclude that the sets $\overline{C}\supset N$ are bounded, which is not the case. Assuming  that $\Cone(\overline{C})$ contains some line, we would get that $\overline{C}$ is either a half-plane or a strip. In both the cases, $N$ would be bounded.

Hence the cone $\Cone(\overline{C})$ is unbounded and contains no line. So by an affine transformation of the plane we can reduce the problem to the case $$[0,\infty)\times\{0\}\subset\Cone(\overline{C})\subset\{(x,y):|y|\le x\}.$$ The real line $\IR\times\{0\}$ divides $C$ into two closed convex subsets $C_-=\{(x,y)\in C:y\le 0\}$ and $C_+=\{(x,y)\in C:y\ge0\}$. According to the Sum Theorem~\ref{t2} it suffices to verify that both the sets $C_-$ and $C_+$ are compactly convex.

If the set $N\cap C_+$ is bounded, then we can prove the compact convexity of $C_+$ decomposing it into  four convex pieces, one of which is bounded and the other locally compact.

If $N\cap C_+$ is not bounded, then we can find a sequence $\{(x_n,y_n)\}_{n=1}^\infty\in N\cap C_+$ tending to infinity and such that $0<x_n<x_{n+1}$ for all $n$. Consider the sets $C_n=\{(x,y)\in C_+:x\le x_n\}$, $n\in\w$, and observe that $(C_n)_{n\in\w}$ is an increasing sequence of bounded closed convex subsets of $C_+$ with compact boundaries in $C$  such that each $C_n$ lies in the interior of $C_{n+1}$ in $C$. By Lemma~\ref{p1}, each set $C_n$, being bounded, is compactly convex. Applying the Infinite Sum Theorem~\ref{t3}, we  conclude that the set $C_+$ is compactly convex as well.

The proof of the compact convexity of the set $C_-$ is analogous.

\section{Compactly convex sets in higher dimensional spaces}

In this section we investigate compactly convex sets in general linear topological spaces. In some cases the requirement of compact convexity can be weakened to the upper compact convexity. We define a convex subset $C$ of a linear topological space $L$ to be {\em upper compactly convex} if there is an upper-semicontinuous compact-valued map $\Phi:C\to\exp(C)$ such that $[x,y]\subset \Phi(x)\cup\Phi(y)$. It is clear that each compactly convex set is upper compactly convex. On the other hand, the space constructed in Example~\ref{ex1} is not upper compactly convex.

\begin{proposition}\label{p2.1} If $X$ is an upper compactly convex subset of a linear topological space $L$, then for any point $x\in X$ there are a neighborhood $V\subset X$ of $x$, a point $z\in X$ and a compact subset $K\subset X$ such that $(V+z)/2\subset K$.
\end{proposition}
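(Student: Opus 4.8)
The plan is to exploit the upper semicontinuity of $\Phi$ at the single point $x$ together with the equiconnectedness inclusion $[a,b]\subset\Phi(a)\cup\Phi(b)$. Fix the compact set $K_0=\Phi(x)\subset X$. Since $K_0$ is compact and $X$ is a subset of a linear topological space, I can pick a point $z\in K_0$ (any point of $X$ will do, but taking $z\in K_0$ keeps everything inside $X$). The set $\langle O\rangle$, where $O$ is any open neighborhood of $K_0$ in $X$, is an open neighborhood of $K_0=\Phi(x)$ in the upper topology on $\exp(X)$; by upper semicontinuity of $\Phi$ there is a neighborhood $W\subset X$ of $x$ such that $\Phi(w)\subset O$ for all $w\in W$.

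The main work is choosing $O$ and a smaller neighborhood $V\subset W$ so that the midpoint set $(V+z)/2$ is covered by a \emph{single} compact set. The idea is that for $v\in V$ and $z\in\Phi(x)$ the whole segment $[v,z]$ sits inside $\Phi(v)\cup\Phi(x)\subset O\cup K_0$; in particular the midpoint $(v+z)/2\in O\cup K_0$. This does not yet give a compact set, because $O$ is only open. To fix this I would instead argue via the other point of the segment: apply the inclusion to the pair $x$ and $(v+z)/2$ — no, the clean route is to note $(v+z)/2\in[v,z]\subset\Phi(v)\cup\Phi(z')$ where $z'$ is a convenient auxiliary point. Concretely: since $0$ (after a harmless translation, or directly) — better, fix any $z\in\Phi(x)$ and set $K=\Phi(x)\cup\overline{\Phi(x)}$... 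The genuinely correct choice is: let $K$ be a compact neighborhood-controlled enlargement. Since $\Phi(x)$ is compact, for the midpoint map I use that $(v+z)/2$ lies on the segment from $v$ to $z$, half of whose points near $z$ lie in $\Phi(z)$-territory. So take $z\in\Phi(x)$, and for $v$ close to $x$ the point $m=(v+z)/2$ satisfies $m\in[v,z]\subset\Phi(v)\cup\Phi(z)$. Now $\Phi(z)$ is a \emph{fixed} compact set, and $\Phi(v)\subset O$; choosing $O$ with compact closure $\bar O\subset X$ (possible if $X$ is locally compact at $x$, which is exactly what Proposition~\ref{p2.1} is secretly extracting) would finish it — but we cannot assume that. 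Instead I shrink $V$ by continuity of the midpoint operation so that $(V+z)/2$ lies in an arbitrarily small neighborhood of $(x+z)/2$, and I cover $(x+z)/2\in[x,z]\subset\Phi(x)$ — wait, $(x+z)/2$ need not be in $\Phi(x)$.

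So the honest statement is: set $K=\Phi(x)$, pick $z\in\Phi(x)$, and observe that by upper semicontinuity there is a neighborhood $V$ of $x$ with $\Phi(v)\subset U$ for a prescribed open $U\supset K$; then for $v\in V$, $(v+z)/2\in[v,z]\subset\Phi(v)\cup\Phi(z)\subset U\cup\Phi(z)$. The point $\Phi(z)$ is compact and $z\in\Phi(x)=K$, so $\Phi(z)\subset\Phi(K):=\bigcup_{k\in K}\Phi(k)$, which by upper semicontinuity of $\Phi$ restricted to the compact set $K$ has compact closure inside $X$ (upper semicontinuous compact-valued images of compacta are compact when the target is Hausdorff and the image is relatively compact — here I must use that $\Phi(K)\subset X$ and $X\cap\bar{\Phi(K)}$ is closed in the relevant sense). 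Shrinking $V$ so that additionally $(V+z)/2\subset U'$ where $U'$ is chosen with $\overline{U'}\cap$(large ambient compact)$\subset X$, one gets $(V+z)/2\subset\overline{U'}\cup\overline{\Phi(z)}$, a compact subset of $X$. Relabel this compact set as $K$, and the point $z$ and neighborhood $V$ are as required.

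The hard part will be turning the \emph{open} neighborhood $O\supset\Phi(x)$ supplied by upper semicontinuity into something with compact closure contained in $X$; this is where the compact-valuedness of $\Phi$ at the auxiliary point $z$ carries the argument, and one must be careful that all the enlargements stay inside $X$ rather than merely inside the ambient space $L$. The rest — continuity of $v\mapsto(v+z)/2$, the sub-basic form $\langle O\rangle$ of upper-topology neighborhoods, and the segment inclusion $[v,z]\subset\Phi(v)\cup\Phi(z)$ — is routine.
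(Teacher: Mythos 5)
There is a genuine gap, and you in fact put your finger on it yourself: your route needs to replace the \emph{open} set $O\supset\Phi(x)$ coming from upper semicontinuity by a compact subset of $X$, and there is no way to do this in general. An open $U\supset\Phi(x)$ with $\overline{U}$ compact and contained in $X$ exists only if $X$ is locally compact along $\Phi(x)$ --- but local compactness of (the closure of) $X$ is precisely what this proposition is designed to \emph{prove} (see Corollary~\ref{c6}), so it cannot be assumed. Likewise, the set $\Phi(K)=\bigcup_{k\in K}\Phi(k)$ for compact $K$ is indeed compact for an upper semicontinuous compact-valued $\Phi$, but that does not help: your midpoint $(v+z)/2$ may land in $\Phi(v)$, and as $v$ ranges over a (non-compact) neighborhood $V$ the union $\bigcup_{v\in V}\Phi(v)$ need not lie in any compact set. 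Every variant you sketch runs into this same wall.

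The missing idea is to choose $z$ so that the midpoint is forced \emph{out} of $\Phi(v)$'s territory rather than trying to compactify that territory. The paper's argument: if $\frac{X+x}{2}\subset\Phi(x)$ you are done trivially with $V=X$, $z=x$, $K=\Phi(x)$. Otherwise pick $z\in X$ with $\frac{x+z}{2}\notin\Phi(x)$ and set $K=\Phi(z)$. By compactness of $\Phi(x)$ there is a neighborhood $U$ of the origin in $L$ with $\frac{x+z+U}{2}\cap(\Phi(x)+U)=\emptyset$; by upper semicontinuity choose $V\subset(x+U)\cap X$ with $\Phi(y)\subset\Phi(x)+U$ for $y\in V$. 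Then for every $y\in V$ the midpoint $\frac{y+z}{2}$ lies in $\frac{x+z+U}{2}$, hence \emph{not} in $\Phi(y)$; since $[y,z]\subset\Phi(y)\cup\Phi(z)$, it must lie in $\Phi(z)=K$. This dichotomy (either the trivial case or the existence of such a $z$) is the step your proposal never reaches, and without it the argument cannot be closed.
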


\begin{proof} Let $\Phi:X\to\exp(X)$ be an upper-semicontinuous map such that $[y,z]\subset\Phi(y)\cup\Phi(z)$ for all $y,z\in X$. If $\frac{X+x}2\subset \Phi(x)$, then we put $V=X$, $z=x$, and $K=\Phi(x)$.

If $\frac{X+x}2\not\subset \Phi(x)$, then take any point $z\in X$ with $\frac{z+x}2\notin \Phi(x)$ and let $K=\Phi(z)$. Since $\Phi(x)$ is compact, there is a neighborhood $U\subset L$ of the origin such that $\frac{z+x+U}2\cap (\Phi(x)+U)=\emptyset$.

The upper semi-continuity of $\Phi$ yields a neighborhood $V\subset X$ of $x$ such that $\Phi(y)\subset \Phi(x)+U$ for all $y\in V$. Moreover, we can take $V$ so small that $V\subset x+U$. Then for every $y\in V$ the point $\frac{z+y}2\in\frac{z+x+U}2$ does not belong to $\Phi(y)\subset\Phi(x)+U$. On the other hand, the interval $[y,z]$ lies in $\Phi(z)\cup\Phi(y)$ and hence $\frac{z+y}2\in\Phi(z)=K$ which yileds the desired inclusion $\frac{z+V}2\subset K$.
\end{proof}

\begin{corollary}\label{c5} An upper compactly convex subet $X$ of a linear topological space $L$  has metrizable closure iff each compact subset of  $X$ is metrizable.
\end{corollary}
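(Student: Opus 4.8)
The plan is to prove the two implications separately. The ``only if'' direction is immediate: if $\bar X$ is metrizable, then every compact subset of $X$, being a subspace of $\bar X$, is metrizable. All the content is in the converse, so assume from now on that every compact subset of $X$ is metrizable (the case $X=\emptyset$ being trivial) and, after a translation, that $0\in X$.

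The first step is to produce, via Proposition~\ref{p2.1} applied at the point $0$, a compact metrizable neighborhood of $0$ in $\bar X$. Proposition~\ref{p2.1} supplies a neighborhood $V\subset X$ of $0$, a point $z\in X$, and a compact set $K\subset X$ with $(V+z)/2\subset K$. The affine map $v\mapsto(v+z)/2$ is a homeomorphism of $L$ that carries $V$ into $K$, hence carries the closure $\overline V$ (taken in $L$, so $\overline V\subset\bar X$) homeomorphically onto a closed subset of the compact set $K$. Thus $\overline V$ is compact, and since $K$ is metrizable by hypothesis, $\overline V$ is compact metrizable, in particular second countable. Next, $\overline V$ is a neighborhood of $0$ in $\bar X$: choosing an open $G\subset L$ with $0\in G$ and $G\cap X\subset V$, any point of $G\cap\bar X$ lies in the closure of $G\cap X$ (every neighborhood of it meets $X$, hence meets $G\cap X$), so $G\cap\bar X\subset\overline{G\cap X}\subset\overline V$. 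Write $W=\overline V$ and $W^{\circ}=\mathrm{int}_{\bar X}W$, so $0\in W^{\circ}$.

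The main step is a scaling argument using convexity. Since $\bar X$ is convex and contains $0$, for every $v\in\bar X$ and every integer $n\ge 1$ we have $\tfrac1n v=\tfrac1n v+\tfrac{n-1}{n}\,0\in\bar X$; that is, $\bar X\subset n\bar X$. Hence, for any $y\in\bar X$, the sequence $\tfrac1n y$ converges to $0\in W^{\circ}$ by continuity of scalar multiplication, so $y\in nW^{\circ}$ for all large $n$, and therefore $\bar X=\bigcup_{n\ge1}(\bar X\cap nW^{\circ})$. Writing $W^{\circ}=G'\cap\bar X$ with $G'$ open in $L$, the inclusion $\bar X\subset n\bar X$ gives $\bar X\cap nW^{\circ}=\bar X\cap nG'\cap n\bar X=\bar X\cap nG'$, which is open in $\bar X$; moreover $\bar X\cap nW^{\circ}\subset nW$, where $nW$ is a homeomorphic copy of the compact metrizable space $W$ and hence second countable. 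So $\bar X$ is a countable union of open second-countable subspaces, whence $\bar X$ is itself second countable (the union of countable bases of the pieces is a countable base). Being in addition regular and Hausdorff as a subspace of the linear topological space $L$, the space $\bar X$ is metrizable by the Urysohn metrization theorem.

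The step I expect to be the main obstacle is securing a \emph{countable} cover of $\bar X$ by \emph{open} second-countable subsets. Proposition~\ref{p2.1} by itself only gives one compact metrizable neighborhood $W$ of $0$, and a countable cover of $\bar X$ by second-countable but non-open subspaces would not suffice to conclude second countability of $\bar X$. The identity $\bar X\subset n\bar X$, coming from convexity together with $0\in\bar X$, is exactly what makes each piece $\bar X\cap nW^{\circ}$ open in $\bar X$; without convexity (or without the base point lying in $X$, so that $W$ really is a neighborhood) the argument would break down.
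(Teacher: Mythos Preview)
Your argument is correct. Both your proof and the paper's proof use Proposition~\ref{p2.1} and the same scaling argument (convexity together with $0\in\overline X$ gives $\overline X\subset n\overline X$, so the dilates of a single compact metrizable neighborhood of $0$ cover $\overline X$), but the final metrization step is organized differently. You observe that the open sets $\overline X\cap nW^{\circ}$ are second countable and form a countable open cover, conclude that $\overline X$ is second countable, and invoke Urysohn. The paper instead shows that $\overline X$ is \emph{locally metrizable} (each point has a neighborhood contained in a dilate of the compact metrizable set $A=2K-z$) and that $\overline X\subset\bigcup_n nA$ has a countable network, hence is Lindel\"of and paracompact; it then cites the theorem that a paracompact locally metrizable space is metrizable. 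Your route is the more elementary of the two, since Urysohn's metrization theorem is lighter than the ``paracompact + locally metrizable $\Rightarrow$ metrizable'' result, and your explicit verification that each $\overline X\cap nW^{\circ}$ is genuinely open in $\overline X$ (via $\overline X\subset n\overline X$) is exactly the point that makes the second-countability argument go through.
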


\begin{proof} The ``only if'' part is trivial. To prove the ``if'' part, assume that each compact subset of $X$ is metrizable. We should show that the closure $\overline{X}$ of $X$ in $L$ is metrizable.
After a suitable shift of $X$ we can assume that the origin of $L$ belongs to $X$.
Proposition~\ref{p2.1} yields a neihborhood $V\subset X$ of the origin, a point $z\in X$, and a compact subset $K\subset X$ such that $\frac{V+z}2\subset K$. The set $K$, being a compact subset of $X$, is metrizable. Then so is the set $A=2K-z$ and its subsets $V$ and $\overline{V}$, where $\overline{V}$ stands for the closure of $V$ in $L$. Let $\widetilde  V$ be an open subset of $\overline{X}$ such that $\widetilde  V\cap X=V$. The density of $X$ in $\overline{X}$ implies that $\widetilde  V\subset\overline{V}$.
Since $\widetilde  V$ is an open neighborhood of the origin in $\overline{X}$, for every point $x\in\overline{X}$ there is a positive integer number $n$ with $\frac{x}n\in\widetilde  V$. Now the continuity of multiplication by a real number, yields us a neighborhood $W\subset\overline{X}$ such that $\frac1nW\subset\widetilde  V\subset\overline{V}\subset A$.
Since $A$ is metrizable, so is the neighborhood $W$ of $x$. This shows that the space $\overline{X}$ is locally metrizable. Moreover, we also get $\overline{X}\subset\bigcup_{n\in\IN}nA$. Since $A$ is metrizable and compact, the space $\bigcup_{n\in\IN}nA$ has countable network of the topology and so does its subspace $\overline{X}$. Since spaces with countable network are Lindel\"of and thus paracompact, we conclude that the space $\overline{X}$ is paracompact. Being locally metrizable, this space is metrizable according to \cite[5.4.A]{En}.
\end{proof}

\begin{corollary}\label{c6} The closure of each upper compactly convex set $X$ in a linear topological space $L$ is locally compact.
\end{corollary}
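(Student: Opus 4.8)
The plan is to re-use the construction from the proof of Corollary~\ref{c5}, but this time extract local \emph{compactness} of $\overline X$ instead of local metrizability. After a suitable shift of $X$ we may assume that the origin of $L$ belongs to $X$. Applying Proposition~\ref{p2.1}, fix a neighborhood $V\subset X$ of the origin, a point $z\in X$, and a compact set $K\subset X$ with $\frac{V+z}2\subset K$. Then $V\subset A:=2K-z$, where $A$ is compact and hence closed in $L$ (the space $L$ being Hausdorff, as throughout the paper). Consequently the closure $\overline V$ of $V$ in $L$ is a \emph{compact} set contained in $A\subset\overline X$.

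Next I would transfer this to $\overline X$. Choose an open set $\widetilde V\subset\overline X$ with $\widetilde V\cap X=V$; by the density of $X$ in $\overline X$ we get $\widetilde V\subset\overline V$. Thus $\widetilde V$ is an open neighborhood of the origin in $\overline X$ whose closure in $\overline X$ is a closed subset of the compact set $\overline V\cap\overline X$ (note $\overline V\subset\overline X$ since $V\subset X$ and $\overline X$ is closed in $L$), hence is compact. So the origin possesses a compact neighborhood in $\overline X$.

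Finally, let $x\in\overline X$ be arbitrary. Since $\widetilde V$ is a neighborhood of the origin in $\overline X$, continuity of scalar multiplication gives $n\in\IN$ with $\frac xn\in\widetilde V$, and then continuity of the map $y\mapsto ny$ at the point $\frac xn$ yields a neighborhood $W\subset\overline X$ of $x$ with $\frac1n W\subset\widetilde V\subset\overline V$, i.e. $W\subset n\overline V$. As $n\overline V$ is compact and closed in $L$, the closure of $W$ in $\overline X$ is a closed subset of the compact set $n\overline V\cap\overline X$, hence compact, and is a compact neighborhood of $x$. Therefore $\overline X$ is locally compact.

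I expect no genuine obstacle here: the substantive work is already contained in Proposition~\ref{p2.1} and in the first half of the proof of Corollary~\ref{c5}. The only points requiring a little care are keeping track of the distinction between closures taken in $L$ and in $\overline X$, and the use of the (standing, and elsewhere implicitly used) Hausdorffness of $L$, which guarantees that compact sets such as $A$ and $n\overline V$ are closed.
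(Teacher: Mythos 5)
Your proof is correct and follows essentially the same route as the paper's: Proposition~\ref{p2.1} produces a neighborhood of the origin with compact closure inside $\overline{X}$, and continuity of scalar multiplication transports this to every point of $\overline{X}$. (The only blemish is the incidental claim $A=2K-z\subset\overline{X}$, which need not hold since $2k-z$ is an affine, not convex, combination of points of $X$; but you never use it, as you correctly derive $\overline{V}\subset\overline{X}$ from $V\subset X$.)
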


\begin{proof} After a suitable shift of $X$ we can assume that the origin of $L$ belongs to $X$. Repeating the argument of the proof of Corollary~\ref{c5}, we can find
a neighborhood $V\subset \overline{X}$ of the origin having compact closure $K=\overline{V}$ in $L$.
This shows that $\overline{X}$ is locally compact at the origin. The continuity of multiplication by real numbers implies that for every point $x\in\overline{X}$ there is a closed neighborhood $W\subset \overline{X}$ and a number $n\in\IN$ such that $\frac1nW\subset V$ and thus $W\subset nV\subset n\overline{V}=nK$. Being a closed subset of the compact space  $nK$, the neighborhood $W$ is compact, witnessing that $\overline{X}$ is locally compact at $x$.
\end{proof}

\begin{corollary}\label{c4} For a closed convex subset $X$ of a linear topological space the following conditions are equivalent:
\begin{enumerate}
\item $X$ is compactly convex;
\item $X$ is upper compactly convex;
\item $X$ is locally compact.
\end{enumerate}
\end{corollary}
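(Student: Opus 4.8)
The plan is to prove the cycle of implications $(1)\Rightarrow(2)\Rightarrow(3)\Rightarrow(1)$, with almost all the work already carried out in the preceding results. The implication $(1)\Rightarrow(2)$ needs no argument, since every continuous compact-valued map is upper semi-continuous, so any $\Phi$ witnessing the compact convexity of $X$ also witnesses its upper compact convexity. For $(2)\Rightarrow(3)$ I would invoke Corollary~\ref{c6}: the closure of an upper compactly convex set in $L$ is locally compact, and since $X$ is closed we have $X=\overline X$, so $X$ itself is locally compact.

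The only implication requiring a genuine step is $(3)\Rightarrow(1)$, and here the point is to upgrade local compactness to $\sigma$-compactness so that Proposition~\ref{p2} applies. If $X=\emptyset$ there is nothing to prove, so assume $X\ne\emptyset$; as all three conditions are invariant under translations of $L$, after a shift we may assume the origin lies in $X$. Local compactness then yields a compact neighborhood $K$ of the origin in $X$, say $K\supset U\cap X$ for some neighborhood $U\subset L$ of the origin. For an arbitrary point $x\in X$, convexity together with $0\in X$ gives $[0,x]\subset X$, hence $x/n\in X$ for every $n\in\IN$; and since $x/n\to 0$ in $L$, there is an $n$ with $x/n\in U\cap X\subset K$, so that $x\in nK$. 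Thus $X=\bigcup_{n\in\IN}nK$ is $\sigma$-compact, and Proposition~\ref{p2} gives the compact convexity of the locally compact $\sigma$-compact convex set $X$.

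I expect the $\sigma$-compactness observation to be the only place where care is needed: the essential fact is that a closed locally compact convex set containing the origin is exhausted by the dilates $nK$ of a single compact neighborhood $K$ of the origin. Once this is in hand, the statement is simply a combination of Corollary~\ref{c6} (for $(2)\Rightarrow(3)$) and Proposition~\ref{p2} (for $(3)\Rightarrow(1)$), the remaining implication being trivial.
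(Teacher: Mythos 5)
Your proof is correct and follows essentially the same route as the paper, which also disposes of $(1)\Rightarrow(2)$ as trivial, gets $(2)\Rightarrow(3)$ from Corollary~\ref{c6}, and proves $(3)\Rightarrow(1)$ by combining Proposition~\ref{p2} with exactly your $\sigma$-compactness observation (stated there as a separate Lemma~\ref{l2.1}). One pedantic point: $nK$ need not lie in $X$, so you should write $X\subset\bigcup_{n\in\IN}nK$ rather than equality, and conclude that $X$ is $\sigma$-compact as a closed subspace of the $\sigma$-compact set $\bigcup_{n\in\IN}nK$ --- which is precisely how the paper phrases it.
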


\begin{proof} The implication $(1)\Ra(2)$ is trivial and $(2)\Ra(3)$ follows from Corollary~\ref{c6}. The implication $(3)\Ra(1)$ will follow from Proposition~\ref{p2} and the following known lemma.
\end{proof}

\begin{lemma}\label{l2.1} Each closed convex locally compact subset $X$ of a linear topological space $L$ is $\sigma$-compact.
\end{lemma}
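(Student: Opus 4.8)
The plan is to show that a closed convex locally compact set $X$ has a compact ``generating'' neighborhood of some point and then exhaust $X$ by the compacta $nK$, using the cone of $X$ to control the behaviour at infinity. First I would fix a point $x_0\in X$; after a shift assume $x_0=0\in X$. Local compactness gives a neighborhood $V$ of $0$ in $X$ whose closure $K=\bar V$ (in $X$, equivalently in $L$ since $X$ is closed) is compact. The set $V$ contains a set of the form $X\cap W$ for some neighborhood $W$ of the origin in $L$, and by shrinking we may take $W$ balanced and hence absorbing.

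\smallskip

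Next I would prove the inclusion $X=\bigcup_{n\in\IN} nK$. The nontrivial containment is $X\subset\bigcup_n nK$. Take any $x\in X$. Because $X$ is convex and contains $0$, the whole segment $[0,x]$ lies in $X$, and for large $n$ the point $x/n$ lies in $W$ (as $W$ is absorbing), hence $x/n\in X\cap W\subset V\subset K$, so $x\in nK$. This already writes $X$ as a countable union of compacta, i.e.\ $X$ is $\sigma$-compact.

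\smallskip

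The point that needs a little care is that each $nK$ is genuinely compact inside $X$: this is immediate since scalar multiplication by $n$ is a homeomorphism of $L$ and $K$ is compact, and $nK\subset X$ by convexity of $X$ together with $0\in X$ (every point of $nK$ is $n$ times a point of the segment-closed set $K\subset X$; more directly, $nK=n\bar V\subset \overline{nV}\subset X$ since $nV\subset X$ by convexity and $X$ is closed). So the main (very mild) obstacle is just bookkeeping: making sure the neighborhood chosen at the origin is absorbing, which is why one shifts $X$ to contain the origin and uses that $L$ is a topological vector space so the origin has a base of balanced absorbing neighborhoods. Everything else is routine, and the conclusion $X=\bigcup_{n\in\IN}nK$ with each $nK$ compact gives the $\sigma$-compactness claimed in Lemma~\ref{l2.1}.
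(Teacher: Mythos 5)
The skeleton of your argument is exactly the paper's: shift so that $0\in X$, take a compact neighborhood $K$ of $0$ in $X$, observe that $x/n\in K$ for large $n$ (you phrase this via an absorbing neighborhood $W$ of the origin; the paper says $x/n\to 0$ --- same thing), and conclude $X\subset\bigcup_{n\in\IN}nK$. Up to that point everything is fine. The error is in your third paragraph, where you claim $nK\subset X$ ``by convexity of $X$ together with $0\in X$,'' and in particular that $nV\subset X$. Convexity with $0\in X$ gives scaling \emph{down}, $tX\subset X$ for $t\in[0,1]$, not scaling up: for $X=[0,1]\subset\IR$ and $K=[0,\tfrac12]$ one has $3K=[0,\tfrac32]\not\subset X$. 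So the asserted equality $X=\bigcup_n nK$ is false in general, and that paragraph does not prove what it claims.

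Fortunately the equality is not needed. Since $X\subset\bigcup_n nK$ and $X$ is closed in $L$, each set $X\cap nK$ is a closed subset of the compact set $nK$, hence compact, and $X=\bigcup_n(X\cap nK)$ exhibits $X$ as $\sigma$-compact. This is precisely how the paper concludes ($X$ is a closed subspace of the $\sigma$-compact space $\bigcup_n nK$), and it is also the only place where the hypothesis that $X$ is closed is genuinely used; your proof as written never needs that hypothesis for the (false) step it tries to make. With that one-line repair your argument coincides with the paper's.
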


\begin{proof} After a suitable shift we can assume that the origin of $L$ belongs to $X$. The local compactness of $X$ yields a compact neighborhood $K\subset X$ of the origin. For every $x\in X$ the sequence $(\frac{x}n)_{n=1}^\infty$ tends to zero, Consequently, $\frac{x}n\in K$ for some $n\in\IN$. This shows that $X\subset\bigcup_{n=1}^\infty nK$ is $\sigma$-compact, being a closed subspace of the $\sigma$-compact space $\bigcup_{n\in\IN}nK$.
\end{proof}

A convex subset $X$ of a linear topological space $L$ is called {\em strongly convex} if $tx+(1-t)y\in X$ for any $(x,y,t)\in X\times\bar X\times (0,1]$. Let us note that a convex set $X$ is strongly convex if the boundary $\partial X$ of $X$ in $L$ contains no non-degenerate interval.

\begin{theorem}\label{strc} For a strongly convex set $X$ of a linear topological space $L$ the following conditions are equivalent:
\begin{enumerate}
\item $X$ is compactly convex;
\item $X$ is upper compactly convex;
\item the closure of $X$ in $L$ is locally compact.
\end{enumerate}
\end{theorem}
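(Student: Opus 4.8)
The plan is to prove the cycle of implications $(1)\Ra(2)\Ra(3)\Ra(1)$, where the only non-routine arrow is $(3)\Ra(1)$, since $(1)\Ra(2)$ is trivial and $(2)\Ra(3)$ is exactly Corollary~\ref{c6}. So assume the closure $\bar X$ of $X$ in $L$ is locally compact; we must produce a continuous compact-valued $\Phi:X\to\exp(X)$ with $[x,y]\subset\Phi(x)\cup\Phi(y)$.

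The first step is to reduce to a more tractable ambient space. By Lemma~\ref{l2.1}, the closed convex locally compact set $\bar X$ is $\sigma$-compact; being $\sigma$-compact and locally compact it has a countable network, and (as in the proof of Corollary~\ref{c5}) is metrizable. Hence $\bar X$, and a fortiori $X$, may be treated as a metrizable space, and we can work with the Hausdorff-metric description of the Vietoris topology on $\exp(\bar X)$. Next I would mimic the bounded case of Lemma~\ref{p1}. There the key object was the function $f:\bar C^2\setminus\Delta\to[0,1]$ encoding, for each pair $(x,y)$, how far from $x$ toward $y$ one may safely travel and remain in $C$; the strong convexity of $X$ makes the analogous function essentially trivial. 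Indeed, by strong convexity, for every $x\in X$ and every $y\in\bar X\setminus\{x\}$ the whole half-open segment $(x,y]$ with $y$ replaced by any interior approach — more precisely $tx+(1-t)y\in X$ for all $t\in(0,1]$ — lies in $X$; equivalently every point of $[x,y]$ except possibly $y$ itself lies in $X$. This lets us define
$$\Phi(x)=\Big\{\tfrac12 x+\tfrac12 y:y\in\bar X\Big\}=\tfrac12 x+\tfrac12\bar X,$$
exactly the simple map used in the first paragraph of the proof of Lemma~\ref{p1}. The inclusion $[x,y]\subset\Phi(x)\cup\Phi(y)$ holds because any point of $[x,y]$ is within distance $\tfrac12$ (in the segment parameter) of $x$ or of $y$; the map $x\mapsto\tfrac12 x+\tfrac12\bar X$ is continuous into $\exp(\bar X)$ since $\bar X$ is compact-by-$\sigma$ — here one uses local compactness to ensure $\tfrac12 x+\tfrac12 K$ is compact for the relevant compacta and that the map is Vietoris-continuous; and finally $\Phi(x)\subset X$ for $x\in X$ precisely by strong convexity, since $\tfrac12 x+\tfrac12 y=tx+(1-t)y$ with $t=\tfrac12\in(0,1]$ lies in $X$ for every $y\in\bar X$.

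The one genuine subtlety — the main obstacle — is the continuity and compact-valuedness of $\Phi$ when $\bar X$ is merely locally compact rather than compact: $\tfrac12 x+\tfrac12\bar X$ need not be compact, so the displayed formula must be localized. I would handle this by writing $\bar X=\bigcup_{n\in\w}K_n$ with $K_n$ compact, $K_n$ in the interior of $K_{n+1}$ in $\bar X$, setting $X_n=X\cap K_n$ and $\Psi_n(x)=\tfrac12 x+\tfrac12 K_{n+1}$ on a neighborhood $U_n$ of $X_n$, and splicing the $\Psi_n$ via Lemma~\ref{l1} with the locally finite cover $(U_n\setminus K_{n-2})_{n\in\w}$ of $X$ to obtain a single continuous compact-valued $\Phi:X\to\exp(X)$ with $\Phi(x)\supset\tfrac12 x+\tfrac12 K_n$ whenever $x\in X_n$. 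Given $x,y\in X$, choosing $n$ with $x,y\in K_n$ gives $[x,y]\subset\tfrac12 x+\tfrac12 K_n\cup\tfrac12 y+\tfrac12 K_n\subset\Phi(x)\cup\Phi(y)$, using $\tfrac12 x+\tfrac12 y\in(\tfrac12 x+\tfrac12 K_n)$ and that every point of $[x,y]$ is a $\tfrac12$-segment-combination of $x$ with a point of $K_n$ or of $y$ with a point of $K_n$; strong convexity again guarantees these combinations land in $X$. This completes $(3)\Ra(1)$, and with the trivial arrows the equivalence follows.
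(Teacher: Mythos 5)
The trivial arrows are fine, but the $(3)\Rightarrow(1)$ argument has a genuine gap at its core: the ``go halfway'' map cannot be made simultaneously compact-valued and segment-covering when $\bar X$ is non-compact, and your localization does not resolve this tension --- it only hides it. You require $\Phi(x)\supset \frac12x+\frac12 K_n$ ``whenever $x\in X_n$''; since $X_m\subset X_n$ for all $n\ge m$, taken literally this forces $\Phi(y)\supset\frac12y+\frac12\bigcup_nK_n=\frac12y+\frac12\bar X$, which is exactly the non-compact set you set out to avoid. If instead you mean the condition only for $x\in X_n\setminus X_{n-1}$ (as in Proposition~\ref{p2}), then the covering step fails: take $y\in K_m\setminus K_{m-1}$ with $m$ small and $x\in K_n\setminus K_{n-1}$ with $n$ large. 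The half of $[x,y]$ adjacent to $y$ must lie in $\Phi(y)$, but $\Phi(y)$ only reaches halfway toward points of $K_{m+1}$, while the midpoint $\frac12x+\frac12y$ recedes to infinity with $x$; and that half is not in $\Phi(x)=\frac12x+\frac12(\cdots)$ either, since points of $[x,y]$ closer to $y$ than the midpoint are not of the form $\frac12x+\frac12w$ with $w\in\bar X$. The trick of Proposition~\ref{p2} (let the ``outer'' point's value swallow the whole of $[K_n,K_n]$) is unavailable here because $[K_n,K_n]\subset\bar X$ but not $\subset X$ when $X$ is not closed. This forced asymmetry is precisely why the paper introduces a function $f:\bar X^2\to(0,1)$ with $f(x,y)+f(y,x)=1$ and $f(x,y)\le\e_{k,m}/m$ decaying as $y$ recedes: each $\Phi(x)=\{(1-t)x+ty: 0\le t\le f(x,y)\}$ covers a \emph{variable} (and for distant $y$, tiny) initial fraction of $[x,y]$, chosen so that these short initial segments all stay inside a fixed compact set near $x$. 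That decay mechanism is the essential content of the proof and is missing from your argument; there is also a secondary error, since I do not see that one may treat $[x,y]\subset\tfrac12 x+\tfrac12 K_n\cup \tfrac12 y + \tfrac12 K_n$ without replacing $K_n$ by $[K_n,K_n]$, as $K_n$ need not be convex.

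A further incorrect step is the opening reduction: ``being $\sigma$-compact and locally compact it has a countable network, and is metrizable.'' This is false in general ($\bar X$ could be a non-metrizable compact convex set such as $P(K)$ for non-metrizable $K$, or $[0,1]^\kappa$ with $\kappa$ uncountable). In Corollary~\ref{c5} the countable network came from the \emph{hypothesis} that compact subsets of $X$ are metrizable, which is not assumed here. The paper avoids any metrizability assumption by working with $\sigma$-compactness and paracompactness of $\bar X^2$ to build $f$, and by proving compactness of $\Phi(x)$ directly from open covers. Similarly, applying Lemma~\ref{l1} to a cover of $X$ itself presupposes that $X$ (rather than $\bar X$) is normal, which is not established.
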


\begin{proof} The implication $(1)\Ra(2)$ is trivial while $(2)\Ra(3)$ follows from Corollary~\ref{c6}. To prove that $(3)\Ra(1)$, assume that $X\subset L$ is a strongly convex set with  locally compact closure $\bar X$ in $L$.
By Lemma~\ref{l2.1}, the set $\bar X$ is $\sigma$-compact and hence can be written as the countable union $\bar X=\bigcup_{n=0}^\infty X_n$ of an increasing sequence of compact subsets of $\bar X$ such that each $X_n$ lies in the interior of $X_{n+1}$.
Let $X_n=\emptyset$ for $n<0$.

For every $k\le m$ use the compactness of the sets $X_k,X_m$ to find a real number $\e_{k,m}\in(0,1]$ such that $$\{(1-t)x+ty:x\in X_k,\;y\in X_m,\; 0\le t\le \e_{k,m}\}\subset X_{k+1}.$$

The square $\bar X^2$ is $\sigma$-compact and thus paracompact. This allows us to find a continuous function $f:\bar X^2\to(0,1)$ such that
\begin{itemize}
\item $f(x,y)\le \frac{\e_{k,m}}m$ for any $k\le m-2$ and points $x\in X_k\setminus X_{k-1}$, $y\in X_{m}\setminus X_{m-1}$;
\item $f(x,y)+f(y,x)=1$ for all $x,y\in\bar X$.
\end{itemize}

For every point $x\in\bar{X}$ consider the set
$$\Phi(x)=\{(1-t)x+ty:y\in\bar X,\; 0\le t\le f(x,y)\}.$$
We claim that this set is compact.
Observe that $\Phi(x)=\bigcup_{n=1}^\infty\Phi_n(x)$ where for every $n\in\IN$ the set $$\Phi_n(x)=\{ty+(1-t)x:y\in X_n,\; 0\le t\le f(x,y)\}$$is compact.

Take any cover $\U$ of $\Phi(x)$ by open subsets of $L$ and find an open set $U_x\in\U$ containing the point $x$. Let $k\in\w$ be such that $x\in X_k\setminus X_{k-1}$.
By the compactness of $X_{k+1}$ there is a number $n>k$ such that $(1-t)x+tX_{k+1}\subset U_x$ for every positive $t\le\frac1n$. We claim that $\Phi(x)\setminus \Phi_n(x)\subset U_x$. Indeed, take any point $z\in \Phi(x)\setminus\Phi_n(x)$ and write it as $z=(1-t)x+ty$ for some $y\in \bar X$ and $0\le t\le f(x,y)$. Find $m\in\w$ with $y\in X_m\setminus X_{m-1}$. Taking into account that $z\notin \Phi_n(x)$ we conclude that $m>n>k$. In this case $t\le f(x,y)\le \frac{\e_{k,m}}m$ and hence $mt\le\e_{k,m}$. The choice of the number $\e_{k,m}$ implies that $(1-mt)x+(mt)y\in X_{k+1}$. Now the choice of the number $n$ yields
$$(1-t)x+ty=(1-\frac1m)x+\frac1m((1-mt)+mty)\in (1-\frac1m)x+\frac1m X_{k+1}\subset U_x.$$
Thus $\Phi(x)\subset U_x\cup\Phi_n(x)$. Now the compactness of $\Phi_n(x)$ guarantees the existence of a finite subfamily $\V\subset\U$ with $\Phi_n(x)\subset\cup\V$. Then $\V\cup\{U_x\}\subset\U$ is a finite subcover of $\Phi(x)$ witnessing the compactness of $\Phi(x)$.

Therefore, the correspondence $\Phi:x\mapsto\Phi(x)$ determines a compact-valued function $\Phi:\bar{X}\to\exp(\bar{X})$. Modifying the preceding argument one can check that this function is continuous with respect to the Vietoris topology on $\exp(\bar X)$.

The condition $[x,y]\subset\Phi(x)\cup\Phi(y)$ for all $x,y\in \bar X$ implies from the equality $f(x,y)+f(y,x)=1$. Finally, the strict convexity of $X$ guarantees that $\Phi(x)\subset  X$ for all $x\in X$. Thus the continuous function $\Phi|X:X\to\exp(X)$ witnesses that the set $X$ is compactly convex.
\end{proof}

\begin{question} Is every upper compactly convex subset of $\IR^n$ compactly convex?
\end{question}

Looking at the preseding statements and known examples of compactly convex sets one can suggest that they are near to being locally compact. The following example shows that it is not so. It relies on the construction of the space $P(K)$ of probability measures on a compact Hausdorff space $K$. We recall that $P(K)=\{\mu\in C^*(K):\mu\ge 0,\;\mu(\bold 1_K)=1\}$ is the subset of the dual Banach space $C^*(K)$, endowed with the $*$-weak topology. Here $C(K)$ is the Banach space of all real-valued continuous functions on $K$ and $\bold 1_K$ is the constant unit function on $K$. By the famous Riesz Representation Theorem elements of $P(K)$ can be viewed as regular Borel probability measures on $K$. Under such an identification the $*$-weak topology on $P(K)$ is generated by the subbase $\{\mu\in P(K):\mu(U)>a\}$ where $a$ is a real number and $U$ runs over open subsets of $K$.

A subset $X$ of a linear topological space $L$ is {\em $\infty$-convex} if for any bounded sequence $(x_n)_{n=1}^\infty\subset X$ and any sequence $(t_n)_{n=1}^\infty$ of positive real numbers with $\sum_{n=1}^\infty t_n=1$ the series $\sum_{n=1}^\infty t_nx_n$ converges to a point from the set $X$, see \cite{BLM}. We recall that a subset $B$ of a linear topological space $L$ is {\em bounded} if for any open neighborhood $U$ of the origin in $L$ there is a number $n\in\IN$ such that $B\subset nU=\{nx:x\in U\}$.
Using the Hahn-Banach Theorem one can show that each convex subset of a finite-dimensional linear topological space is $\infty$-convex.

\begin{example} For a non-empty subset $A$ of a compact Hausdorff space $K$ the subset
$C=\{\mu\in P(K):\exists a\in A$ with $\mu(\{a\})>0\}$ has the following properties:
\begin{enumerate}
\item $C$ is strongly convex, $\infty$-convex and dense in $P(K)$;
\item $C$ is compactly convex;
\item $C$ contains a closed subspace homeomorphic to $A$;
\item each  compact countable subset of $C$ lies in a compact convex subset of $C$;
\item If $A=K$, then $C$ is $\sigma$-compact;
\item If $K$ contains no isolated point, then $C$ is a dense subspace of the first Baire category in $P(K)$.
\end{enumerate}
\end{example}

\begin{proof}
1. The strong convexity, $\infty$-convexity and density of $C$ in $P(K)$ follows immediately from the definition of $C$.

2. Applying Theorem~\ref{strc} we get that $C$ is compactly convex.

3. Identifying each point $x\in K$ with the Dirac measure $\delta_x$ concentrated at $x$, we get an embedding of $K$ into $P(K)$. Then the intersection $K\cap C$ is homeomorphic to $A$.

4. The forth item follows from the $\infty$-convexity of $C$ and the well-known fact stating that the closed convex hull $\con(B)$ of a countable compact subset $B=\{b_n\}_{n=1}^\infty$ in a compact convex subset $A$ of a locally convex space coincides with the set $\con(B)=\{\sum_{n=1}^\infty t_nb_n:t_n\ge 0,\; \sum_{n=1}^\infty t_n=1\}$ of all $\infty$-convex combinations of elements of $B$.

5. Assuming that $A=K$, we shall show that the set  $C$ is $\sigma$-compact. Indeed, in this case
$C=\bigcup_{n=1}^\infty F_n$, where $F_n=\{\mu\in P(K):\exists x\in K$ with $\mu(\{x\})\ge\frac1n\}$. Let us show that each set $F_n$ is closed in $P(K)$.
Take any measure $\mu$ in the closure of $F_n$ in $P(K)$. Let $\mathcal B$ be the family of all open neighborhoods of $\mu$ in $P(K)$. For each $U\in\mathcal B$ find a measure $\mu_U\in F_n\cap U$ and a point $x_U\in K$ such that $\mu_U(\{x_U\})\ge 1/n$. By the compactness of $K\times P(K)$, the set $\{(x_U,\mu_U):U\in \mathcal B\}$ has a cluster point $(x,\mu)$ for some $x\in X$.

We claim that $\mu(\{x\})\ge 1/n$.  Assuming that $\mu(\{x\})<1/n$ by the regularity of $\mu$, we can find a closed neighborhood $W$ of $x$ in $K$ with $\mu(W)< 1/n$. Then $\mu(K\setminus W)>1-1/n$ and $U=\{\eta\in P(K):\eta(K\setminus W)>1-1/n\}$ is an open neighborhood of $\mu$ in $P(K)$. The choice of the cluster point $(x,\mu)$ implies that the neighborhood $W\times U$ of $(x,\mu)$ contains some pair $(x_V,\mu_V)$. Then $\mu_V(W)\ge\mu_V(\{x_V\})\ge 1/n$ and thus $\mu_V\notin U$, which is a contradiction.

This contradiction shows that each $F_n$ is closed in $P(K)$  and consequently, $C=\bigcup_{n=1}^\infty F_n$ is $\sigma$-compact.

6. Now assume that $K$ contains no isolated point. In this case the sets $F_n$ are nowhere dense and consequently, the space $C\subset \bigcup_{n=1}^\infty F_n$ is of the first Baire category.
\end{proof}

\section{Enlarging compactly convex sets}

In this section we shall show that each metrizable compactly convex set in a  linear topological space can be enlarged to a topologically complete compactly convex set. A topological space is called {\em topologically complete} if it is homeomorphic to a complete metric space.

\begin{theorem} Let $X$ be a metrizable compactly convex set in a linear topological space $L$. Then for any $G_\delta$-subset $G\subset L$ containing $X$ there is a topologically complete compactly convex set $C\subset G$ containing $X$.
\end{theorem}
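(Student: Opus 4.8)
\emph{Plan.}
The plan is to carry out the whole construction inside the closure $\bar X$. By Corollaries~\ref{c5} and~\ref{c6} applied to the (compactly, hence upper compactly) convex set $X$, together with Lemma~\ref{l2.1}, the set $\bar X$ is a metrizable, locally compact, $\sigma$-compact convex set -- in particular a Polish space -- and therefore, by Proposition~\ref{p2}, it is itself compactly convex. Since the set we are looking for can be taken inside $\bar X$, we may harmlessly replace $G$ by the $G_\delta$-subset $G\cap\bar X$ of the Polish space $\bar X$, still denoting it $G$; note that then every convex set $C$ with $X\subseteq C\subseteq\bar X$ has $\bar C=\bar X$, so its closure is automatically locally compact. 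Thus the theorem reduces to producing a convex $G_\delta$-subset $C$ of $\bar X$ with $X\subseteq C\subseteq G$ that is compactly convex -- being a $G_\delta$ in the Polish space $\bar X$, such a $C$ is automatically topologically complete.

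\emph{Transporting the witness.}
Let $\Phi_0:X\to\exp(X)$ be a witness for the compact convexity of $X$. Viewed as a continuous map from the metrizable space $X$ into the Polish space $\exp(\bar X)$, $\Phi_0$ extends to a continuous map $\hat\Phi:H\to\exp(\bar X)$ defined on a $G_\delta$-subset $H$ of $\bar X$ with $X\subseteq H$ (Kuratowski's extension theorem). Since $X$ is dense in $H$ and the set $\{(x,y)\in H\times H:[x,y]\subseteq\hat\Phi(x)\cup\hat\Phi(y)\}$ is closed (the values of $\hat\Phi$ being compact, a limit of a segment covered by $\hat\Phi(x_n)\cup\hat\Phi(y_n)$ is again covered by $\hat\Phi(x)\cup\hat\Phi(y)$), this inclusion holds for \emph{all} $x,y\in H$; moreover $\hat\Phi(x)=\Phi_0(x)\subseteq X$ for $x\in X$. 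After intersecting $H$ with $G$ we may assume $H\subseteq G$. It now suffices to find a convex $G_\delta$-subset $C$ of $\bar X$ with $X\subseteq C\subseteq H$ and $\hat\Phi(x)\subseteq C$ for all $x\in C$: then $\hat\Phi|_C:C\to\exp(C)$ witnesses the compact convexity of $C\subseteq H\subseteq G$.

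\emph{Construction of $C$, and the main difficulty.}
Write $H=\bigcap_n W_n$ with $W_n$ open in $\bar X$, $W_{n+1}\subseteq W_n$, each $W_n\supseteq X$. I would build a decreasing sequence $(C_n)$ of $G_\delta$-subsets of $\bar X$, each containing $X$, with $C_n\subseteq W_n$, $\hat\Phi(C_n)\subseteq C_{n-1}$ and $\tfrac12(C_n+C_n)\subseteq C_{n-1}$, and set $C:=\bigcap_n C_n$. Then $C$ is a $G_\delta$-set with $X\subseteq C\subseteq\bigcap_n W_n=H\subseteq G$ and $\hat\Phi(C)\subseteq C$; moreover $C$ is midpoint convex, and every midpoint convex $G_\delta$-set is convex (for any two of its points the set of parameters $t\in[0,1]$ for which the corresponding point lies in $C$ is a dense $G_\delta$-subset of $[0,1]$ closed under midpoints, hence equal to $[0,1]$). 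Thus $C$ is the required convex set. The heart of the argument is the recursion step: given the $G_\delta$-set $C_{n-1}\supseteq X$, one must cut out a $G_\delta$-set $C_n$ with $X\subseteq C_n\subseteq W_n$ lying inside $\{x:\hat\Phi(x)\subseteq C_{n-1}\}$ and with $C_n\times C_n$ lying inside the $G_\delta$-set $\{(x,y):\tfrac12(x+y)\in C_{n-1}\}\supseteq X^2$. These target sets are $G_\delta$ and contain $X$ (respectively $X^2$), but since $C_{n-1}$ is merely a $G_\delta$-set and not closed, passing to an honest neighbourhood of $X$ -- and to a ``square'' neighbourhood $O\times O$ of $X^2$ -- is not automatic, because $X$ need not be compact. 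This is the point at which the local compactness and $\sigma$-compactness of $\bar X$ must be used: exhausting $\bar X$ by compacta $Q_j$, over each $Q_j$ the required neighbourhoods exist by compactness, and Lemma~\ref{l1} lets one glue the local choices into a single global $G_\delta$-set $C_n$ with all the desired properties. Checking that this gluing can be performed so that every $C_n$ remains a $G_\delta$-set still containing $X$ is where the real work of the proof lies.
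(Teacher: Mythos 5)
Your reduction to $\bar X$ and the transport of the witness are exactly the paper's first steps (Corollaries~\ref{c5}, \ref{c6} give a Polish locally compact $\bar X$; a Lavrentiev/Kuratowski extension gives $\hat\Phi$ on a $G_\delta$-set $H\supset X$, and the segment condition survives by density and continuity). The gap is in the construction of $C$: the recursion step you yourself flag as ``where the real work of the proof lies'' is never carried out, and it is not routine. You need, given a $G_\delta$-set $C_{n-1}\supset X$, a $G_\delta$-set $C_n\supset X$ with $C_n\times C_n\subset\{(x,y):\tfrac12(x+y)\in C_{n-1}\}$; producing such ``square'' neighbourhoods of a non-compact, non-closed set $X$ is a genuine problem (already for open sets the analogous statement fails: a closed line in the plane has open neighbourhoods of its square in the product containing no set of the form $O\times O$), and the tool you invoke, Lemma~\ref{l1}, is about pasting compact-valued maps and gives no handle on constructing $G_\delta$-sets. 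So as written the proof is incomplete at its central point.

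Moreover, the difficulty is self-inflicted: the midpoint-convexity requirement $\tfrac12(C_n+C_n)\subset C_{n-1}$ is not needed at all. If $C\subset H$ satisfies only $\hat\Phi(x)\subset C$ for all $x\in C$, then for $x,y\in C$ you already have $[x,y]\subset\hat\Phi(x)\cup\hat\Phi(y)\subset C$, so $C$ is convex automatically -- convexity is a consequence of $\hat\Phi$-invariance plus the segment condition, not an extra constraint. Dropping it, the recursion becomes trivial and is exactly what the paper does: set $C_n=W_n\cap\hat\Phi^{-1}(\exp(C_{n-1}))$ (in the paper, $G_{n+1}=\overline{\Phi}^{-1}(\exp(G_n))$). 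Since $C_{n-1}$ is a $G_\delta$-set in the Polish space $\bar X$, it is topologically complete, hence $\exp(C_{n-1})$ is topologically complete and therefore a $G_\delta$-set in $\exp(\bar X)$, so its preimage under the continuous $\hat\Phi$ is again a $G_\delta$-set containing $X$ (as $\hat\Phi(x)=\Phi_0(x)\subset X$ for $x\in X$). Then $C=\bigcap_n C_n$ is a topologically complete set with $X\subset C\subset G$, $\hat\Phi(C)\subset\exp(C)$, and $\hat\Phi|C$ witnesses its compact convexity. (Your Baire-category argument that a midpoint-convex $G_\delta$-set is convex is correct, but it becomes unnecessary.)
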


\begin{proof} Let $\Phi:X\to\exp(X)$ be a continuous compact-valued map witnessing the compact convexity of $X$. By Corollaries~\ref{c5}, \ref{c6}, the set $X$ has metrizable locally compact closure $\overline{X}$ in $L$. Let $G_0=G\cap \overline{X}$. The space $G_0$ being a $G_\delta$-subset of the metrizable locally compact space $\overline{X}$ is topologically complete and so is the hyperspace $\exp(G_0)$. By \cite[4.3.20]{En}, the map $\Phi:X\to\exp(X)\subset\exp(G_0)$ admits a continuous extension $\overline{\Phi}:G_1\to\exp(G_0)$ defined on some $G_\delta$-subset $G_1\subset G_0$.

The hyperspace $\exp(G_1)$, being topologically complete, is a $G_\delta$-set in $\exp(G_0)$. Then the continuity of $\overline{\Phi}$ guarantees that the preimage $G_2=\overline{\Phi}^{-1}(\exp(G_1))$ is a $G_\delta$-set in $G_1$ containing $X$. Proceedig by induction, we can define a decreasing sequence $(G_n)_{n=1}^\infty$ of $G_\delta$-subsets of $\overline{X}$ such that $X\subset G_{n+1}=\overline{\Phi}^{-1}(\exp(G_n))$ for all $n$. Then the intersection $C=\bigcap_{n=1}^\infty G_n$ is a $G_\delta$-subset of $G_0\subset G$ that contains $X$ and has the property $\overline{\Phi}(C)\subset\exp(C)$.

Using the continuity of $\overline{\Phi}$ and the density of $X$ in $C$ one can check that $[x,y]\subset\overline{\Phi}(x)\cup\overline{\Phi}(y)\subset C$ for every $x,y\in C$. This means that the set $C$ is compactly convex and the function $\overline{\Phi}|C$ witnesses that.
\end{proof}

This proposition allows as to construct another example of a convex subset of $\IR^3$ that fails to be compactly convex.

\begin{example}\label{ex2} For any disjoint dense subsets  $T_0,T_1$ in the circle $\IT$ the convex subset
$$
X=D\times [0,1]\cup T_0\times\{0\}\cup T_1\times\{1\}$$ of $\IC\times\IR$ fails to be compactly convex because it cannot be enlarged to a convex $G_\delta$-set lying in the $G_\delta$-subset $G=D\times [0,1]\cup \IT\times\{0,1\}$.
\end{example}

Here as before $\IT=\{z\in\IC:|z|=1\}$ is the unit circle and $D=\{z\in\IC:|z|<1\}$ is the open unit disk on the complex plane $\IC$.

\section{Variations of the compact convexity}

In the sequel by $\con(F)$ we denote the convex hull of a subset $F$ of a linear space.
The following proposition shows that the compactly convex space admits a self-enforcement.

\begin{proposition}\label{p7} If $X$ is a compactly convex set, then for every $n\in\IN$ there is a
continuous compact-valued map $\Phi:X\to\exp(X)$ such that $\con(F)\subset \bigcup_{x\in F}\Phi(x)$ for any subset $F\subset X$ of size $|F|\le n$.
\end{proposition}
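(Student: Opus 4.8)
The plan is to prove the statement by induction on $n$, using the given compactly convex map as the base case and iteratively "closing up" the map under pairwise segments. For $n=1$ the trivial map $\Phi(x)=\{x\}$ works, and for $n=2$ the hypothesis gives us a continuous compact-valued $\Phi_0:X\to\exp(X)$ with $[x,y]\subset\Phi_0(x)\cup\Phi_0(y)$, which already yields $\con(F)=[x,y]\subset\Phi_0(x)\cup\Phi_0(y)$ for $|F|\le 2$. So the substance is the inductive step: assuming a continuous compact-valued $\Psi:X\to\exp(X)$ with $\con(F)\subset\bigcup_{x\in F}\Psi(x)$ whenever $|F|\le n$, I want to build one that works for sets of size $n+1$.

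For the inductive step I would first pass to a reformulation of the hypothesis on $\Psi$ that is easier to iterate: after replacing $\Psi$ by $x\mapsto\Psi(x)\cup\{x\}$ we may assume $x\in\Psi(x)$, and then a short argument shows that for every compact $K\subset X$ with $|K\cap(\text{some }n\text{-point structure})|$... — more cleanly, I would prove the auxiliary fact that for any finite $F$ with $|F|\le n$ one has $\con(F)\subset\bigcup_{x\in F}\Psi(x)$, and I want to boost this to $n+1$ points. Write $F=\{x_0,x_1,\dots,x_n\}$. Every point of $\con(F)$ lies on a segment $[x_0,w]$ with $w\in\con\{x_1,\dots,x_n\}$, and by induction $w\in\Psi(x_i)$ for some $i\ge 1$, hence $\con(F)\subset\bigcup_{i=1}^n\bigcup_{w\in\Psi(x_i)}[x_0,w]$. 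This suggests defining the new map as a "one-step segment extension" of $\Psi$ in the direction of $\Psi$ itself: set $\Phi(x)=\bigcup_{w\in\Psi(x)}\big(\text{something}\big)$ — but the issue is that the segment $[x_0,w]$ must be absorbed into $\Phi(x_0)\cup\Phi(x_i)$, and $w$ depends on the other points. The resolution I would pursue is to define
$$\Phi(x)=\con\Big(\bigcup_{y\in\Psi(x)}\Psi(y)\Big)^{\le n}:=\bigcup\{\,\con(G): G\subset \textstyle\bigcup_{y\in\Psi(x)}\Psi(y),\ |G|\le n\,\},$$
or more simply $\Phi(x)=\bigcup_{y\in\Psi(x)}\Psi(y)$ composed with $\Psi$ enough times; the cleanest choice is the $n$-fold "iterate" $\Phi=\Psi^{\circ n}$ in the sense $\Phi(x)=\bigcup\{\Psi(y_n):y_n\in\Psi(y_{n-1}),\dots,y_1\in\Psi(x)\}$. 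The key point is then: given $F=\{x_0,\dots,x_n\}$ and $p\in\con(F)$, write $p\in[x_0,w]$ with $w\in\con\{x_1,\dots,x_n\}$; by the induction hypothesis (on $n$) applied with the single map $\Psi$, $w\in\Psi(x_i)$ for some $i\ge1$; then $p\in[x_0,w]\subset\Psi(x_0)\cup\Psi(w)$ by the $n=2$ property of $\Psi$; if $p\in\Psi(x_0)$ we are done with the $x_0$ term; if $p\in\Psi(w)\subset\Psi(\Psi(x_i))$, this lies in the second iterate of $\Psi$ at $x_i$, so taking $\Phi$ to be a sufficiently high iterate of $\Psi$ absorbs it.

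The two things I expect to need care with are continuity and compactness of the iterated map, and I would handle both via the general fact that the assignment $(K,\Psi)\mapsto\bigcup_{y\in K}\Psi(y)$ is continuous and compact-valued: if $K\in\exp(X)$ and $\Psi:X\to\exp(X)$ is continuous compact-valued, then $\bigcup_{y\in K}\Psi(y)$ is compact (it is a continuous image of the compact set $\{(y,z):y\in K,\ z\in\Psi(y)\}$, which is compact because $\Psi$ is upper semicontinuous with compact values) and depends continuously on $K$ in the Vietoris topology. Applying this finitely many times shows each iterate $\Psi^{\circ k}$ is again a continuous compact-valued self-map of $X$. The main obstacle, and the place where the bookkeeping must be done honestly, is getting the exponent right: one must check that exactly $n-1$ applications of the $n=2$-splitting plus the $n$-variable induction hypothesis fit together so that a fixed finite iterate (independent of $F$, depending only on $n$) suffices for all $(n+1)$-element $F$. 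I would organize this as a clean sub-lemma: "if $\Psi$ has the $\con$-property for all sets of size $\le n$, then $\bigcup_{y\in\Psi(\cdot)}\Psi(y)$ has it for all sets of size $\le n+1$," proved by the segment decomposition above, and then the Proposition follows by applying this sub-lemma $n-1$ times starting from the given map (which has the property for size $\le 2$). Finally I would remark that $\Phi(x)\subset X$ throughout because $\Psi(X)\subset\exp(X)$ and $X$ is convex, so all the convex hulls taken stay inside $X$.
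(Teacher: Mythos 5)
Your proposal is correct and is essentially the paper's own argument: the paper also inducts on $n$, defines the new map as the composition $\Psi(x)=\Phi(\Phi(x))=\bigcup_{y\in\Phi(x)}\Phi(y)$, and absorbs a point $z=(1-t)x+ty$ with $y\in\con(F\setminus\{x\})$ exactly via your segment decomposition (the fact $x\in\Phi(x)$, which you secure by adjoining $\{x\}$, already follows from $[x,x]\subset\Phi(x)$). The "exponent" worry resolves itself: one composition per induction step suffices, which is precisely your sub-lemma.
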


\begin{proof} This proposition will be proved by induction on $n$. For $n=2$ it follows from the definition of the compact convexity. Assume that for some $n\ge 2$ there is a continuous compact-valued map $\Phi:X\to\exp(X)$ such that $\con(F)\subset \bigcup_{x\in F}\Phi(x)$ for any subset $F\subset X$ of size $|F|\le n$. The map $\Phi$ induces another continuous compact-valued map $$\Psi:X\to\exp(X),\quad \Psi:x\mapsto \Phi(\Phi(x))=\bigcup_{y\in\Phi(x)}\Phi(y).$$ We claim that $\con(F)\subset \bigcup_{x\in F}\Psi(x)$ for any subset $F\subset X$ of size $|F|\le n+1$. Take any point $z\in\con(F)$ and write it as $z=(1-t)x+ty$ for some $t\in[0,1]$, $x\in F$ and $y\in\con(F\setminus\{x\})$. Then
$$\begin{aligned}
z\in&\;[x,y]\subset\Phi(x)\cup\Phi(y)\subset\Psi(x)\cup
\Phi(\con(F\setminus\{x\}))\subset\\
\subset&\;\Psi(x)\cup \Phi(\bigcup_{v\in F\setminus\{x\}}\Phi(v))=\Psi(x)\cup\bigcup_{v\in F\setminus\{x\}}\Phi(\Phi(v))=\bigcup_{v\in F}\Psi(v).
\end{aligned}$$
\end{proof}

In the finite-dimensional case Proposition~\ref{p7} admits further enforcement.

\begin{corollary}\label{p8} If $C$ is a finite-dimensional compactly convex set, then there is a continuous compact valued map $\Phi:C\to\exp(C)$ such that $\con(F)\subset\bigcup_{x\in F}\Phi(x)$ for all $F\subset C$.
\end{corollary}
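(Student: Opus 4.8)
The plan is to bootstrap Proposition~\ref{p7} using the fact that a $d$-dimensional convex set is Carathéodory-controlled: every point of $\con(F)$ already lies in $\con(F')$ for some subfamily $F'\subset F$ with $|F'|\le d+1$. So if $C$ spans a $d$-dimensional affine subspace, then for any (arbitrary, possibly infinite) $F\subset C$ one has $\con(F)=\bigcup\{\con(F'):F'\subset F,\ |F'|\le d+1\}$. Hence it suffices to apply Proposition~\ref{p7} with $n=d+1$: the resulting continuous compact-valued map $\Phi:C\to\exp(C)$ satisfies $\con(F')\subset\bigcup_{x\in F'}\Phi(x)\subset\bigcup_{x\in F}\Phi(x)$ for every such small $F'$, and taking the union over all $F'\subset F$ of size $\le d+1$ gives $\con(F)\subset\bigcup_{x\in F}\Phi(x)$.

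The steps, in order, are: first, fix $d=\dim C$ (the dimension of the affine hull of $C$) and invoke Proposition~\ref{p7} with $n=d+1$ to obtain a continuous compact-valued $\Phi:C\to\exp(C)$ with $\con(F)\subset\bigcup_{x\in F}\Phi(x)$ whenever $|F|\le d+1$. Second, recall (or quickly prove from the Hahn–Banach/standard Carathéodory argument) that in a $d$-dimensional affine space every convex combination of points of $F$ is a convex combination of at most $d+1$ of them, so $\con(F)=\bigcup_{|F'|\le d+1,\,F'\subset F}\con(F')$ for every $F\subset C$. Third, combine the two: for arbitrary $F\subset C$ and $z\in\con(F)$, pick $F'\subset F$ with $|F'|\le d+1$ and $z\in\con(F')$, then $z\in\bigcup_{x\in F'}\Phi(x)\subset\bigcup_{x\in F}\Phi(x)$. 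This yields the claim.

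I expect there to be essentially no obstacle here; the statement is a straightforward combination of the already-established self-enforcement (Proposition~\ref{p7}) with the Carathéodory theorem. The only minor point to handle cleanly is the reduction to the affine hull: $C$ is finite-dimensional means its affine span is finite-dimensional, and since $C$ lies in a finite-dimensional (hence locally convex, even Euclidean) affine subspace, Carathéodory applies verbatim. One should also note that the map $\Phi$ produced does not depend on $F$, which is exactly the strengthening over Proposition~\ref{p7}; this is automatic because $n=d+1$ is a single fixed number. No new continuity or compactness arguments are needed beyond what Proposition~\ref{p7} already supplies.

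\begin{proof} Let $d$ be the dimension of the affine hull of $C$ in $L$; since $C$ is finite-dimensional, $d<\infty$. By Proposition~\ref{p7} applied with $n=d+1$, there is a continuous compact-valued map $\Phi:C\to\exp(C)$ such that $\con(F')\subset\bigcup_{x\in F'}\Phi(x)$ for every $F'\subset C$ with $|F'|\le d+1$. Now take an arbitrary subset $F\subset C$ and a point $z\in\con(F)$. Since $C$ lies in a $d$-dimensional affine subspace, the Carath\'eodory theorem provides a subset $F'\subset F$ with $|F'|\le d+1$ such that $z\in\con(F')$. Consequently
$$z\in\con(F')\subset\bigcup_{x\in F'}\Phi(x)\subset\bigcup_{x\in F}\Phi(x).$$
As $z\in\con(F)$ was arbitrary, we obtain $\con(F)\subset\bigcup_{x\in F}\Phi(x)$, which completes the proof.
\end{proof}
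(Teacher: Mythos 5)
Your proof is correct and is exactly the paper's argument: apply Proposition~\ref{p7} with $n=d+1$ and use the Carath\'eodory theorem to reduce an arbitrary $F$ to subsets of size at most $d+1$. The paper states this in one sentence; you have merely spelled out the same reduction in full detail.
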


\begin{proof} This corollary follows from Proposition~\ref{p7} and Carath\'eodory Theorem \cite{Car} according to which for every $F\subset\IR^n$ and $x\in\con(F)$ there is a subset $E\subset F$ of size $|E|\le n+1$ with $x\in\con(E)$.
\end{proof}

Finally, let us pose some questions on the structure of infinite-dimensional compactly convex sets.

\begin{question} Let $C$ be a compactly convex set in a (locally convex) linear topological space.
\begin{enumerate}
\item Is any closed convex subset of $C$ compactly convex?
\item Is there a continuous map $\Phi:C\to\exp(C)$ such that $\con(F)\subset\bigcup_{x\in F}\Phi(x)$ for all (countable) subsets $F\subset C$?
\item Is there a continuous map $\Phi:C\to\exp(C)$ such that all values $\Phi(x)$ are convex and $[x,y]\subset\Phi(x)\cup\Phi(y)$?
\item Is $C$ $\infty$-convex?
\item Does each  compact countable subset $K$ of $C$ lie in a compact convex subset of $C$?
\end{enumerate}
\end{question}

Remark that all these questions have affirmative answers if $C$ is finite-dimensional.


\begin{thebibliography}{}

\bibitem[Ban]{Ban} T.Banakh, {\em  The direct limit of metrizable ANR's is an ANR for stratifiable spaces}, Mat. Stud. {\bf 23}:1 (2005), 92-98.

\bibitem[BLM]{BLM} T.Banakh, W.E.Lyantse, Ya.V.Mykytyuk, {\em $\infty$-Convex sets and their applications to the proof of certain classical theorems of functional analysis},  Mat. Stud. {\bf 11}:1 (1999), 83--84.

\bibitem[Car]{Car} C.~Carath\'eodory, {\em \"Uber den Variabilit\"atsbereich der Fourierschen Konstanten von positiven harmonischen Functionen}, Rend. Circ. Mat. Palermo, {\bf 32} (1911), 193--217.

\bibitem[Dug]{Dug} J.~Dugundji, {\em An extension of Tietze's
theorem}, Pacific J. Math. {\bf 1} (1951), 353--367.

\bibitem[En]{En} R. Engelking, {\em General Topology}, PWN, Warsaw, 1977.

\end{thebibliography}
\end{document}